\newcommand\reallywidehat[1]{%
\savestack{\tmpbox}{\stretchto{%
  \scaleto{%
    \scalerel*[\widthof{\ensuremath{#1}}]{\kern-.6pt\bigwedge\kern-.6pt}%
    {\rule[-\textheight/2]{1ex}{\textheight}}
  }{\textheight}%
}{0.5ex}}%
\stackon[1pt]{#1}{\tmpbox}%
}
\newcommand\reallywidecheck[1]{%
\savestack{\tmpbox}{\stretchto{%
  \scaleto{%
    \scalerel*[\widthof{\ensuremath{#1}}]{\kern-.6pt\bigwedge\kern-.6pt}%
    {\rule[-\textheight/2]{1ex}{\textheight}}
  }{\textheight}%
}{0.5ex}}%
\stackon[1pt]{#1}{\scalebox{-1}{\tmpbox}}%
}
\numberwithin{equation}{section}
\newcommand{\NN}{\mathbb N}
\title[Substitutions, Bratteli Diagrams and Algebras]{Symbolic Substitutions, Bratteli Diagrams and Operator Algebras}
\author{Dylan Gawlak}
\address{Department of Mathematics and Statistics, MacEwan University, Edmonton, AB, Canada}
\email{gawlakd@mymacewan.ca}
\author{Christopher Ramsey}
\email{ramseyc5@macewan.ca}
 \newtheorem{theorem}{Theorem}[section]
 \newtheorem{lemma}[theorem]{Lemma}
 \newtheorem{proposition}[theorem]{Proposition}
 \newtheorem{corollary}[theorem]{Corollary}
 \newtheorem{definition}[theorem]{Definition}
 \newtheorem{example}[theorem]{Example}
  \newtheorem{remark}[theorem]{Remark}
\keywords{Bratteli diagram, symbolic substitution, approximately finite dimensional C*-algebra, triangular approximately finite dimensional C*-algebra, operator algebra} 
\subjclass[2020]{47L40, 37B10, 46K50 }
\begin{document}
\maketitle
\begin{abstract}
In this paper we look at symbolic substitutions and their relationship to Bratteli diagrams and their associated operator algebras. In particular, we consider the equivalence relation on substitutions induced by telescope equivalence of Bratteli diagrams. Such an equivalence preserves pure aperiodicity and primitivity but fails to preserve rank, order, and number of letters. In a similar manner, we consider the equivalence relation on substitutions induced by telescope equivalence of ordered Bratteli diagrams. This results in a finer equivalence but fails to provide a complete invariant. An application to Fibonacci-like substitutions is developed.
\end{abstract}

\section{\textbf{Introduction}}

A particular family of infinite directed graphs was introduced by Ola Bratteli \cite{ILFD} to study approximately finite-dimensional (AF) algebras, that is, direct limits of directed sequences of finite-dimensional C*-algebras. 
Since there introduction, Bratteli diagrams have found applications in areas such as topological dynamical systems and symbolic substitutions \cite{AA}, the latter of which is the focus in this paper.

A symbolic substitution is a function from a finite set of letters $\mathcal L$ into the set of all finite words made by the letters in $\mathcal L$. This function can be extended to words made from the letters of $\mathcal L$ by applying the substitution to the individual letters in the word and then concatenating them together. Symbolic substitutions have seen application in areas such as the study of quasicrystals and model sets \cite{AO},  dynamical systems \cite{SDA}, dynamics and coding \cite{LindMarcus}, theoretical physics \cite{SDO}, and logic \cite{LS}, to name a few.

Bratteli diagrams are a natural tool for studying symbolic substitutions as we are able to easily encode information about the substitution matrix corresponding to the substitution rule into a Bratteli diagram. 
In this paper we shall focus on the equivalence relation induced on symbolic substitutions by telescope equivalence of their associated stationary Bratteli diagrams, unordered and ordered. Some properties of substitutions survive, for example simplicity, primitivity, and aperiodicity, but both equivalences collapse many distinct substitutions into one class. Some of these problems have been addressed in  \cite{NSB} and \cite{DIP}. The conclusion of the paper gives a case study of symbolic substitutions supported on powers of the Fibonacci matrix. Telescope equivalence of ordered Bratteli diagrams is shown to be a very natural equivalence relation on this set which is distinct from local indistinguishability. 

We aim to make this a detailed resource and include lots of explanations and examples. To this end, sections 2 and 3 contain the necessary background material on symbolic substitutions and Bratteli diagrams, respectively. In section 4 we study symbolic substitutions through telescope equivalence of their associated Bratteli diagrams. This section culminates in Proposition 4.6 which proves that every primitive substitution matrix has a Bratteli diagram that is telescope equivalent to an infinite collection of Bratteli diagrams associated to distinct primitive substitution matrices. Section 5 studies the invertibility of symbolic substitution matrices. Example 5.2 and Proposition 5.4 together establish that there is a non-invertible substitution matrix whose Bratteli diagram is not telescope equivalent to any diagram arising from an invertible substitution matrix. In section 6 we move into a finer equivalence relation for symbolic substitution matrices, namely telescope equivalence of ordered Bratteli diagrams. In particular, it is established that this equivalence preserves maximal and minimal paths (Corollary 6.5 and Proposition 6.6), which is used to distinguish several variations of the Fibonacci substitution. Proposition 6.16 establishes that any symbolic substitution that is equivalent in this way to that of a power of the Fibonacci matrix must itself be a power of the Fibonacci matrix. The remainder of the section further explores these Fibonacci symbolic substitutions.



\section{\textbf{Symbolic Substitutions}}
We will now turn to symbolic substitutions. For a deeper exploration of this subject look at \cite{AO}.
\begin{definition}
Let $\mathcal L$ be a finite, nonempty set and call the elements of $ \mathcal L$ letters. Let $ \mathcal L^{*}$ be the set of all finite words of elements from $ \mathcal L$. A \textbf{\textit{symbolic substitution}} is any mapping $\sigma :\mathcal L\rightarrow \mathcal L^{*}$. A symbolic substitution can be applied to elements of $\mathcal L^{*}$ by concatenating the substitutions of the individual letters. 
\end{definition}
The most famous example is the Fibonacci substitution: 
\begin{align*}
    \sigma(l_1)&=l_1l_2 \\
    \sigma(l_2)&=l_1
\end{align*}
The name arises because the Fibonacci sequence $(f_n)_{n\geq 0}$ is naturally encoded into this substitution rule. In particular, by applying $\sigma$ $n$ times to $l_1$, the resulting word will have length $f_{n+2}$. Similarly, applying $\sigma$ $n$ times to $l_2$ yields a word of length $f_{n+1}$. The substitution rule also shares the same recursive nature in that $\sigma^n(l_1)=\sigma^{n-1}(l_1)\sigma^{n-2}(l_1)$, $n\geq 2$.

In the study of symbolic substitutions, an important tool is the substitution matrix $M$ associated to a given symbolic substitution.
\begin{definition}
Let $\sigma$ be a substitution on n letters $\{l_1,l_2,...,l_n\}$. The \textbf{\textit{substitution matrix}} $M$ associated with $\sigma$ is the matrix whose $(i,j)$-entry is the number of copies of $l_j$ inside $\sigma(l_i)$.
\end{definition}

From the definition, the substitution matrix is unique. However, there usually are multiple different substitutions leading to the same matrix. This is because the order of the letters in the substitution rule is ignored. For example: \begin{align*}
    \sigma(l_1)&=l_2l_1 \\
    \sigma(l_2)&=l_1
\end{align*}
has the same substitution matrix, $\begin{bmatrix} 1 & 1 \\ 1 & 0
\end{bmatrix}$, as the Fibonacci substitution.  

One thing to note is that this definition implies that the matrices compose ``backwards". That is, if $\sigma_M$ and $\sigma_N$ are symbolic substitutions with substitution matrices $M$ and $N$ then $\sigma_M\circ\sigma_N$ has the substitution matrix $NM$.

\begin{definition}
A non-negative $n\times n$ matrix M is called \textbf{\textit{primitive}} if there exists $k\in \mathbb{N}$ such that all of the entries of $M^k$ are strictly greater than zero.
\end{definition}

\begin{definition}
A substitution rule $\sigma$ on a finite alphabet $\mathcal L$ is called \textbf{\textit{primitive}} if there exists some $k\in \mathbb{N}$ such that for every pair $(i,j)$ $1\leq i,j\leq n$, $l_j$ is a subword of $\sigma^k(l_i)$
\end{definition}

It is trivial to see that a substitution rule is primitive if and only if the corresponding substitution matrix is also primitive. In this paper, primitive substitutions will be the main focus as this allows us to use the Perron-Frobenius theorem.

\begin{theorem}
(Perron-Frobenius Theorem): If $M$ is a primitive matrix, then $M$ has a real eigenvalue that has multiplicity one and has modulus greater than any other eigenvalue of $M$. This eigenvalue is called the \\ \textbf{\textit{PF-eigenvalue}}, and is denoted $\lambda_{PF}$.
\end{theorem}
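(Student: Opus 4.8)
The plan is to prove the Perron--Frobenius theorem through a fixed-point argument followed by a sequence of ``upgrades'' that exploit primitivity. First I would note that a primitive matrix $M$ has no zero column, since a zero column of $M$ forces a zero column in $M^{k}$ for every $k$; consequently the continuous map $f(x) = Mx/\|Mx\|_{1}$ is well defined on the standard simplex $\Delta = \{x \in \RR^{n} : x_{i} \geq 0,\ \sum_{i} x_{i} = 1\}$. Brouwer's fixed-point theorem then supplies $v \in \Delta$ with $Mv = \lambda v$, where $\lambda = \|Mv\|_{1} > 0$. Now fix $k$ with $M^{k}$ strictly positive (entrywise). Since $v \geq 0$ and $v \neq 0$, the vector $M^{k} v = \lambda^{k} v$ is strictly positive, forcing $v$ itself to be strictly positive and $\lambda$ to be real and positive. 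Applying the same reasoning to $M^{T}$ (also primitive) produces a strictly positive left eigenvector $u$ with $u^{T} M = \lambda' u^{T}$, and evaluating $u^{T} M v$ in two ways gives $\lambda' = \lambda$.

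Next I would establish dominance. Suppose $M w = \mu w$ with $w \in \CC^{n}$ nonzero. Taking absolute values entrywise and using $M \geq 0$ yields $|\mu|\,|w| = |Mw| \leq M|w|$, hence $|\mu|\,(u^{T}|w|) \leq u^{T} M |w| = \lambda\,(u^{T}|w|)$, and since $u^{T}|w| > 0$ we conclude $|\mu| \leq \lambda$. For strictness, assume $|\mu| = \lambda$. Then $u^{T}(M|w| - \lambda|w|) = 0$ with $M|w| - \lambda|w| \geq 0$ and $u > 0$ forces $M|w| = \lambda|w|$, so $|w|$ is a Perron eigenvector and $M^{k}|w| = \lambda^{k}|w|$ is strictly positive, whence $|w| > 0$. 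From $|M^{k} w| = M^{k}|w|$ entrywise, the equality case of the triangle inequality $\bigl|\sum_{j} (M^{k})_{ij} w_{j}\bigr| = \sum_{j} (M^{k})_{ij}|w_{j}|$, in which all $(M^{k})_{ij} > 0$ and all $|w_{j}| > 0$, forces every coordinate $w_{j}$ to share a common argument; thus $w = e^{i\phi}|w|$, and then $M|w| = \mu|w|$ with $|w| > 0$ and $M|w| \geq 0$ gives $\mu = \lambda$. Hence $\lambda$ is real and strictly dominates every other eigenvalue in modulus.

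Finally I would prove multiplicity one. For geometric simplicity, observe that any real eigenvector $w'$ for $\lambda$ must be a scalar multiple of $v$: otherwise, for a suitable real $t > 0$ the vector $v - t w'$ would be a nonzero $\lambda$-eigenvector with a zero coordinate, contradicting that $M^{k}(v - t w') = \lambda^{k}(v - t w')$ is strictly positive. Since $M$ and $\lambda$ are real, the complex $\lambda$-eigenspace is the complexification of the real one, so it is one-dimensional. For algebraic simplicity, suppose $M z = \lambda z + v$ for a generalized eigenvector $z$; pairing with $u$ gives $u^{T} M z = \lambda u^{T} z + u^{T} v$, while $u^{T} M z = \lambda u^{T} z$, forcing $u^{T} v = 0$, impossible since $u > 0$ and $v > 0$. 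Therefore $\lambda$ has algebraic multiplicity one, and we set $\lambda_{PF} = \lambda$.

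I expect the main obstacle to be the strict-dominance step in the second paragraph: this is exactly where primitivity (rather than the mere existence of a nonnegative eigenvector) is indispensable, since an irreducible but imprimitive matrix can have several eigenvalues of maximal modulus. Routing the equality analysis of the triangle inequality through the strictly positive power $M^{k}$ is the cleanest way I know to close this gap; everything else is bookkeeping with the left Perron eigenvector $u$.
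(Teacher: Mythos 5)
The paper does not prove this statement: it is quoted as the classical Perron--Frobenius theorem and used as a black box (only Theorem~2.5 and the notion of a purely aperiodic substitution matrix depend on it). So there is no in-paper argument to compare against; your proposal has to stand on its own, and it does. It is the standard self-contained route: existence of a nonnegative eigenpair via Brouwer on the simplex (legitimate, since primitivity rules out zero columns and hence $\|Mx\|_1>0$ on $\Delta$), positivity of $v$ and $\lambda$ by pushing through a strictly positive power $M^k$, a positive left eigenvector $u$ for the same $\lambda$, dominance via $|\mu|\,u^T|w|\le u^T M|w|=\lambda\,u^T|w|$, strict dominance by forcing $M|w|=\lambda|w|$ and then invoking the equality case of the triangle inequality against the strictly positive entries of $M^k$, and simplicity split into the geometric part (no $\lambda$-eigenvector independent of $v$, since subtracting the right multiple of $v$ would yield a nonnegative eigenvector with a zero coordinate, impossible after applying $M^k$) and the algebraic part (no Jordan vector $z$ with $Mz=\lambda z+v$, since pairing with $u$ forces $u^Tv=0$). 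All of these steps are sound, and you correctly identify that primitivity is what closes the strict-dominance gap that genuinely fails for irreducible but imprimitive matrices. The only cosmetic point is in the geometric-simplicity step: the claim that a suitable $t>0$ works presumes $w'$ has a positive coordinate, so you should first replace $w'$ by $-w'$ if necessary; this is a one-line normalization, not a gap.
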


\begin{remark}
One can associate to such a primitive symbolic substitution a discrete set of points on the real line that encode the substitution. Specifically, the left PF-eigenvector, $v_{PF}$, defines a geometric inflation rule by associating to the letter $l_i$ an interval of length $(v_{PF})_i$. The geometric inflation rule is then realized by scaling the interval by a factor of $\lambda_{PF}$ and then dissecting the interval into copies of the original intervals based on the substitution rule \cite{AO}. The right PF-eigenvector on the other hand can be normalized to give us the relative asymptotic frequencies of the letter \cite{AO}.  
\end{remark}

The following few definitions and their theory can be found in can be found in a number of place, e.g. \cite{CSS}.
\begin{definition}
A word $w\in \mathcal L^*$ is said to be a \textbf{\textit{factor}} (or \textbf{\textit{subword}}) of $w'\in \mathcal L^*$ if there exist, possibly empty, words $u,v\in \mathcal L^*$ such that $uwv=w'$.
\end{definition}
\begin{definition}
Let $\sigma$ be a substitution rule on a finite alphabet $\mathcal L$. A word $w\in \mathcal L^*$ is called \textbf{\textit{admissible}} if there exists $l\in \mathcal L$ and $p\in \mathbb{N}$ such that $w$ is a factor of $\sigma^p(l)$.
\end{definition}
\begin{definition}
A bi-infinite sequence $s\in \mathcal L^{\mathbb{Z}}$ is said to be \textbf{\textit{admitted}} by $\sigma$ if every factor of $s$ is admissible  for $\sigma$.
\end{definition}

\begin{definition}
A substitution rule $\sigma$ is said to be \textbf{\textit{aperiodic}} if every admitted bi-infinite sequence in $\mathcal L^{\mathbb Z}$ is not periodic.
\end{definition}
 
Informally, an aperiodic substitution rule can be thought of as a substitution rule which lacks translational symmetry. The following is a well known theorem for primitive substitutions.

\begin{theorem}[Theorem 4.6, \cite{AO}]\label{thm:aperiodic}
Let $\sigma$ be a primitive substitution rule on a finite alphabet with substitution matrix $M$. If the PF-eigenvalue of M is irrational then the symbolic substitution is aperiodic. 
\end{theorem}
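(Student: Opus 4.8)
The plan is to argue by contrapositive: assume that $\sigma$ admits a periodic bi-infinite sequence and show that the PF-eigenvalue $\lambda_{PF}$ must be rational, in fact an algebraic integer that turns out to be a positive integer. So suppose $s \in \mathcal{L}^{\mathbb{Z}}$ is admitted by $\sigma$ and is periodic with minimal period $p$, so $s$ is determined by a single block $u$ of length $|u| = p$ repeated bi-infinitely. The key structural input is primitivity, which via Theorem 2.6 (Perron--Frobenius) gives a strictly positive left eigenvector $v_{PF}$ with $v_{PF} M = \lambda_{PF} v_{PF}$, and the observation (recorded already in the discussion of the Fibonacci substitution) that the length of $\sigma^k(l_i)$ is the $i$-th coordinate of $M^k \mathbf{1}$, where $\mathbf{1}$ is the all-ones vector; more refined, the Parikh (letter-count) vector of $\sigma^k(l_i)$ is the $i$-th row of $M^k$.

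First I would set up the counting machinery: for a finite word $w$, let $\ell(w) \in \mathbb{N}^n$ be its abelianization (the vector of letter-counts), so that $\ell(\sigma(w)) = \ell(w) M$ by the definition of the substitution matrix, and $|w| = \ell(w)\cdot \mathbf{1}$. Next I would exploit periodicity to control letter frequencies. Because $s$ is $p$-periodic and every factor of $s$ is admissible, for each letter $l_i$ the asymptotic frequency of $l_i$ in $s$ exists and is a rational number $q_i = (\text{number of }l_i\text{ in }u)/p$. On the other hand, primitivity forces the frequency of each letter in long admissible words to converge to the normalized right PF-eigenvector (as noted in Remark 2.7): for a primitive substitution, any bi-infinite admitted sequence has uniform letter frequencies equal to the entries of the normalized right Perron eigenvector $w_{PF}$. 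Hence $w_{PF}$ is, up to scaling, the rational vector $(q_1,\dots,q_n)$.

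Then I would extract rationality of $\lambda_{PF}$ from the eigenvector equation. Since $s$ is admitted and $p$-periodic, $\sigma(s)$ is also admitted; I'd argue $\sigma(s)$ is periodic as well, with period dividing $|\sigma(u)| = \ell(u)\cdot(M\mathbf{1})$, and compare letter-frequency vectors. Applying the abelianization map: the frequency vector of $\sigma(s)$ equals (up to normalization) $\ell(u) M$, but it must again be proportional to the rational vector $\ell(u)$ coming from the minimal block of the periodic sequence $\sigma(s)$ — or, more cleanly, directly compute that the right eigenvector $w_{PF}$ (rational, by the previous step) satisfies $M w_{PF} = \lambda_{PF} w_{PF}$, so $\lambda_{PF}$ is a ratio of entries of the rational vector $M w_{PF}$ to entries of $w_{PF}$, hence rational. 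Combined with the fact that $\lambda_{PF}$ is an algebraic integer (it is an eigenvalue of an integer matrix, a root of the monic characteristic polynomial), a rational algebraic integer is an ordinary integer, so $\lambda_{PF} \in \mathbb{N}$; contradicting the irrationality hypothesis.

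The main obstacle is the step asserting that a primitive substitution forces any admitted bi-infinite sequence to have well-defined uniform letter frequencies equal to the normalized right PF-eigenvector, and making precise the interaction between "admitted" (defined via factors of $\sigma^p(l)$) and periodicity — in particular that the frequency vector of the periodic sequence really is the rational block-count vector and really is forced to be the Perron direction. I expect to handle this by a uniform-convergence argument: every sufficiently long admissible word is a subword of some $\sigma^k(l_i)$ whose abelianization is a row of $M^k$, and $M^k/\lambda_{PF}^k$ converges to the rank-one projection onto the Perron eigenvectors (a standard consequence of the spectral gap in Perron--Frobenius), so the normalized letter-count vector of any long admissible word is close to $w_{PF}$; since the periodic sequence contains arbitrarily long factors whose frequency vector is exactly the rational block vector, that block vector must lie in the Perron direction. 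Everything else is bookkeeping with the identity $\ell(\sigma(w)) = \ell(w)M$ and the algebraic-integer rationality argument.
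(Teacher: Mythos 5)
The paper does not prove this theorem --- it is imported verbatim from Baake--Grimm (Theorem 4.6 of \cite{AO}) --- so there is no internal proof to compare against; your contrapositive strategy (periodic admitted sequence $\Rightarrow$ rational letter frequencies $\Rightarrow$ rational PF-eigenvector $\Rightarrow$ $\lambda_{PF}$ rational) is the standard textbook route and is the right one. The final steps are fine: a $p$-periodic sequence has exactly rational frequencies $q_i$, a strictly positive rational eigenvector $v$ of the integer matrix $M$ forces $\lambda_{PF}=(Mv)_1/v_1\in\QQ$, and you do not even need the algebraic-integer refinement since rationality already contradicts the hypothesis. (One cosmetic slip: with this paper's convention that $M_{ij}$ counts occurrences of $l_j$ in $\sigma(l_i)$, the abelianization satisfies $\ell(\sigma(w))=\ell(w)M$ as a row vector, so the frequency vector is a \emph{left} eigenvector here, despite what Remark 2.7 says; this does not affect the rationality argument.)

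The genuine gap is in the one lemma you correctly identify as the main obstacle: that \emph{every} admitted bi-infinite sequence has letter frequencies given by the PF direction. Your proposed justification --- ``every sufficiently long admissible word is a subword of some $\sigma^k(l_i)$, whose normalized abelianization is close to the PF direction'' --- does not work as stated, because a word of length $n$ may only occur as a factor of $\sigma^K(l)$ for $K$ with $\lambda_{PF}^K\gg n$, and the letter statistics of a short subword of a huge supertile are not controlled by the statistics of the whole supertile. The correct fix is the supertile decomposition: since $w$ is a factor of $\sigma^K(l)=\sigma^k\bigl(\sigma^{K-k}(l)\bigr)$, it is covered by consecutive level-$k$ supertiles $\sigma^k(a_1),\dots,\sigma^k(a_r)$ and \emph{contains} $\sigma^k(a_2)\cdots\sigma^k(a_{r-1})$ entirely; choosing $k$ with $\max_i|\sigma^k(l_i)|\leq \epsilon|w|$ makes the two boundary supertiles negligible, while each interior supertile has normalized abelianization (a normalized row of $M^k$) within $\delta_k\to 0$ of the PF direction by the spectral gap. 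Averaging over the interior supertiles then shows the normalized letter-count vector of every sufficiently long admissible word --- in particular of arbitrarily long factors of your periodic sequence, whose count vectors are exactly multiples of the rational block vector --- is close to the PF direction, which is what your argument needs. With that lemma repaired, the proof is complete.
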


If a primitive substitution matrix $M$ has an irrational PF-eigenvalue we shall say that it is a \textit{\textbf{purely aperiodic}}  substitution matrix. Do note that the converse to the previous theorem is false.

Substitution tilings can be generalized in multiple ways with the most common being geometric tiling substitutions in the Euclidean plane. This is essentially a higher-dimensional version  of the geometric inflation rule for symbolic substitutions, where the letters are replaced with prototiles, which are subsets of $\mathbb{R}^2$ that are homeomorphic to the unit ball. More details about this can be found in \cite{EP} and \cite{AO}. In this paper, we will be focusing on symbolic substitutions exclusively. 

\section{\textbf{Bratteli Diagrams}}
In this section, we introduce Bratteli diagrams and their associated AF-algebras. These ideas were first introduced by Bratteli in \cite{ILFD}. 

\begin{definition}
A C*-algebra $\mathcal A$ is said to be an \textbf{\textit{AF-algebra}} (Approximately Finite-dimensional algebra) if there exists a sequence of finite-dimensional C*-algebras $\{\mathcal A_i\}$ and $*$-homomorphisms $\phi_i : \mathcal A_i \rightarrow \mathcal A_{i+1}$ such that $\mathcal A$ is the closure of the algebraic direct limit of the chain system $(\mathcal A_i,\phi_i)$, $i\in \mathbb{N}$.
\end{definition}

Recall that a $*$-homomorphism $\phi:\mathcal A\rightarrow \mathcal B$ between C$^*$-algebras is an algebra homomorphism that preserves adjoints, namely $\phi(a^*) = \phi(a)^*$ for all $a\in \mathcal A$.

All finite-dimensional C*-algebras are $\ast$-isomorphic to the direct sum of full matrix algebras. As a result, if $\mathcal A_i \cong \bigoplus_{i=1}^{n_1} M_{m_i}\subseteq M_s$ were $s=\sum_{i=1}^{n_1}m_i$ and $\mathcal A_{i+1} \cong \bigoplus_{i=1}^{n_2}M_{m'_i}\subseteq M_t$ were $t=\sum_{i=1}^{n_2}m'_i$  it can be shown that the $\ast$-homomorphism $\phi_i:\mathcal A_i \rightarrow \mathcal A_{i+1}$ is inner equivalent to the embedding of copies of the full matrix algebras of $\mathcal A_i$ into the full matrix algebras of $\mathcal A_{i+1}$ as block diagonal entries along with zero matrices when needed. $\ast$-homomorphisms of this form are said to be canonical.  

\begin{example}
Consider $\phi (\mathbb{C}\oplus M_2)\rightarrow M_3\oplus M_5 $ given by: $$\phi (\mathbb{C}\oplus M_2)=\begin{bmatrix}\mathbb{C} & 0 \\ 0 & M_2
\end{bmatrix} \oplus \begin{bmatrix}
M_2 & 0 \\ 0 & 0
\end{bmatrix} $$ where the zero matrices are of the appropriate sizes. 
\end{example}
In the above example, the embedding is not unital, but from here on we shall always assume that the embeddings are unital. 
Because $\ast$-homomorphisms are inner equivalent to these canonical $\ast$-homomorphisms, we are able to associate a matrix to a $\ast$-homomorphism called the transition matrix.

\begin{definition}
Let $\mathcal A_1 \cong \bigoplus_{i=1}^{n_1}M_{m_i} $ and $\mathcal A_2 \cong \bigoplus_{i=1}^{n_2}M_{m'_i}$ be two finite-dimensional C*-algebras and let $\phi$ be a canonical $\ast$-homomorphism from  $\mathcal A_1$ into $\mathcal A_2$. We define the \textbf{\textit{transition matrix}} $B$ to be an $n_1\times n_2$ matrix where the $(i,j)$-entry denotes the number of copies of $M_{m_i}$ inside of $M_{m'_j}$.

\end{definition}

Using the transition matrices, we are able to associate AF-algebras with special types of infinite directed graphs called labelled Bratteli diagrams.

\begin{definition}
A \textbf{\textit{(labelled) Bratteli diagram}} $(V,E, r, s,d )$ is an infinite directed graph $(V,E, r, s)$ with a labelling $d$ which satisfies the following properties: \\
1. The vertex set $V$ and edge set $E$ can be written as the countable union of nonempty pairwise disjoint finite sets $V_n$, $n\geq 0$ and $E_n$, $n\geq 1$  \\
2. There exist range and source maps $r$ and $s$ respectively such that for any edge $e\in E_n$ then $s(e)\in V_{n-1}$ and $r(e)\in V_n$. Additionally $r^{-1}(v)\neq \varnothing$ and $s^{-1}(v)\neq \varnothing$ with the exception of $v\in V_0$ \\ 
3. There exists a map $d:V\rightarrow \mathbb{N}$, called the labelling such that for any vertex $v$ not in $V_0$, $d(v)= \displaystyle{\sum_{r(e)=v}}d(s(e))$ 
\end{definition}
Note that this definition of a Bratteli diagram is slightly more restrictive than in some other papers as it corresponds to chain systems with unital embeddings. 

The association between diagram and algebra is fairly straightforward. 
In particular, for an algebraic chain system of finite-dimensional C*-algebras $(\mathcal A_n,\phi_n)$, where $\mathcal A_n\cong \bigoplus_{i=1}^{m_n}M_{s_i}$, each full matrix algebra gets a vertex associated to it, with the labeling denoting the size of the matrix algebra. The edges between levels are assigned based on the transition matrices. To any given AF-algebra, we can associate infinitely many Bratteli diagrams. However, any individual Bratteli diagram corresponds to only one AF-algebra, see Proposition \ref{BratteliAF}. 

\begin{definition}
Two Bratteli diagrams are said to be isomorphic if there exist bijections between the vertices and edges such that the labelling, range and source maps are preserved.
\end{definition}

Isomorphism then corresponds to permuting the vertices or equivalently applying a permutation conjugation to the transition matrix at every level.
For example, the following two diagrams are isomorphic:
\begin{figure}[H]
\begin{tikzcd}
1 \arrow[r] \arrow[rd] & 2 \arrow[r] \arrow[rd] & 3 \arrow[r] \arrow[rd]  & \cdots \\
1 \arrow[ru]           & 1 \arrow[ru]           & 2 \arrow[ru]                     & \cdots
\end{tikzcd}
\quad \quad \quad
\begin{tikzcd}
1 \arrow[rd]           & 1 \arrow[rd]           & 2 \arrow[rd]                    & \cdots \\
1 \arrow[ru] \arrow[r] & 2 \arrow[ru] \arrow[r] & 3 \arrow[ru] \arrow[r]  & \cdots
\end{tikzcd}
\end{figure}


\begin{definition}
Let $(V,E,r,s,d)$ be a Bratteli diagram and  let $(m_n)_{n\in \mathbb{N}}$ be a strictly increasing sequence in $\mathbb{N}$. The \textbf{\textit{telescoping}} of $(V,E,r,s,d)$ with respect to $m_{n}$ is the Bratteli diagram $(V',E',r',s',d')$ where $V_{n}'=V_{m_{n}}$ and $E'_n$ is determined by the set of all paths from $V_{m_{n-1}}$ to $V_{m_{n}}$. $d'$ is the restriction of $d$ to $V'$. $r'$ and $s'$ are the extensions of $r$ and $s$ to the paths that define $E'_{n}$. 
\textbf{\textit{Microscoping}} is the inverse of telescoping.
\end{definition}

For example, the telescoping of the above diagram to its even levels gives:

\begin{figure}[h]
\begin{tikzcd}
1 \arrow[r, shift right] \arrow[r, shift left]  & 3 \arrow[r, shift right] \arrow[r, shift left] \arrow[rd] & 8 \arrow[r, shift right] \arrow[r, shift left] \arrow[rd] & 21 \arrow[r, shift right] \arrow[r, shift left] \arrow[rd] & 55 \arrow[r, shift right] \arrow[r, shift left] \arrow[rd] & \cdots \\
1 \arrow[ru] \arrow[r]             & 2 \arrow[ru] \arrow[r]             & 5 \arrow[ru] \arrow[r]             & 13 \arrow[ru] \arrow[r]             & 34 \arrow[ru] \arrow[r]             & \cdots
\end{tikzcd}
\end{figure}

It is not hard to see that the transition matrix for this Bratteli diagram is $\begin{bmatrix} 2 & 1 \\ 1 & 1
\end{bmatrix}=\begin{bmatrix}1 & 1 \\ 1 & 0  \end{bmatrix}^2$
In general, telescoping corresponds to matrix multiplication, while microscoping corresponds to matrix factorization. 

\begin{definition}
Two Bratteli diagrams are said to be \textbf{\textit{telescope equivalent}} if you can obtain one from the other by a finite number of telescopings, microscopings, and isomorphisms. 
\end{definition}

\begin{theorem}[Bratteli \cite{ILFD}]\label{BratteliAF}
Let $\mathcal A$ and $\mathcal B$ be AF-algebras. Then $\mathcal A$ is $*$-isomorphic to $\mathcal B$ if and only if their Bratteli diagrams are telescope equivalent.
\end{theorem}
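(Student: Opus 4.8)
The plan is to treat the two implications separately; the reverse one is routine and the forward one carries all the work. For the ``if'' direction, suppose the Bratteli diagrams of $\mathcal A$ and $\mathcal B$ are telescope equivalent. An isomorphism of diagrams produces literally the same chain system up to relabelling of the matrix blocks, so it does not change the limit algebra. Telescoping a diagram along a strictly increasing sequence $(m_n)$ corresponds to passing from the chain system $(\mathcal A_n,\phi_n)$ to the subsystem $(\mathcal A_{m_n},\widetilde\phi_n)$, where $\widetilde\phi_n$ is the composite of the $\phi$'s along the telescoped edges; this subsystem has the same algebraic direct limit, hence the same closure. Microscoping is the inverse move. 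Therefore each of the three allowed operations preserves the $*$-isomorphism class of the limit, and one direction follows immediately.

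For the ``only if'' direction, fix a $*$-isomorphism and use it to regard $\mathcal A$ and $\mathcal B$ as one unital C*-algebra $\mathcal C$ carrying two increasing sequences of finite-dimensional C*-subalgebras $\mathcal A_0\subseteq\mathcal A_1\subseteq\cdots$ and $\mathcal B_0\subseteq\mathcal B_1\subseteq\cdots$, each with dense union in $\mathcal C$ and each with unital inclusions. The goal is to construct a single increasing chain of finite-dimensional subalgebras of $\mathcal C$ that interleaves a subsequence of the $\mathcal A$'s with a subsequence of the $\mathcal B$'s, using genuine unital inclusions; its Bratteli diagram will then telescope to a telescoping of each original diagram, giving telescope equivalence. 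Two lemmas drive this. The first is a perturbation lemma: if $\mathcal F\subseteq\mathcal C$ is finite dimensional and every element of the unit ball of $\mathcal F$ lies within $\epsilon$ of a C*-subalgebra $\mathcal D$, then for $\epsilon$ small (depending only on $\dim\mathcal F$) there is a unitary $u\in\mathcal C$ with $\|u-1\|$ small and $u\mathcal F u^*\subseteq\mathcal D$; one proves this by choosing a system of matrix units for $\mathcal F$, approximating them by near-matrix-units in $\mathcal D$, correcting these to exact matrix units by functional calculus and polar decomposition, and assembling the correcting partial isometries. The second is a rigidity lemma: two unital $*$-homomorphisms between fixed finite-dimensional C*-algebras that agree to within a small enough pointwise tolerance have equal transition matrices (the multiplicities are integers, so ``close'' forces ``equal'') and are conjugate by a near-identity unitary.

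With these in hand I would run an interleaving induction. Since $\bigcup_m\mathcal B_m$ is dense, $\mathcal A_{n_1}$ sits $\epsilon_1$-close to some $\mathcal B_{m_1}$, so a small unitary moves it inside; since $\bigcup_n\mathcal A_n$ is dense, that copy of $\mathcal B_{m_1}$ sits close to some $\mathcal A_{n_2}$, and so on, alternating, with $\epsilon_k\to 0$ chosen summably fast so that the accumulated product of correcting unitaries converges to a unitary $w\in\mathcal C$. Conjugating everything by $w$ produces a single increasing chain of finite-dimensional subalgebras of $\mathcal C$ whose rungs alternate between (a unitary conjugate of) a subsequence of the $\mathcal A_n$ and one of the $\mathcal B_m$, all inclusions now genuine and unital. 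Reading off the transition matrices of this chain along the $\mathcal A$-rungs recovers, up to an isomorphism of Bratteli diagrams, the telescoping of the $\mathcal A$-diagram to $(n_k)$; reading them off along the $\mathcal B$-rungs recovers the telescoping of the $\mathcal B$-diagram to $(m_k)$. A single diagram that telescopes to both makes the two original diagrams telescope equivalent.

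The main obstacle is the perturbation machinery together with the bookkeeping: upgrading ``$\mathcal F$ is almost inside $\mathcal D$'' to ``a conjugate of $\mathcal F$ is exactly inside $\mathcal D$'' with control uniform in $\dim\mathcal F$, and then threading infinitely many such corrections so that the correcting unitaries converge and the combinatorics of the resulting zig-zag chain match the two telescopings on the nose (here the rigidity lemma is what guarantees the composite maps along the chain are genuinely the telescoped connecting maps, not merely close to them). The density arguments and the identification of the zig-zag's transition matrices with the telescoped ones are routine once the perturbation lemma is available.
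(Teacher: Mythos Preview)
The paper does not supply its own proof of this theorem; it is quoted as a classical result of Bratteli \cite{ILFD} and used thereafter as a black box (the next statement, Proposition \ref{telescopeequiv}, is merely recorded as a well-known consequence). There is therefore nothing in the paper to compare your argument against.

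That said, your sketch is the standard Bratteli--Glimm approach and is correct in outline. The ``if'' direction is exactly as you say. For the ``only if'' direction, the two ingredients you isolate---a perturbation lemma upgrading approximate containment of a finite-dimensional subalgebra to exact containment via a near-identity unitary, and a rigidity lemma forcing nearby unital $*$-homomorphisms of finite-dimensional C*-algebras to have equal multiplicity matrices---are precisely the tools Bratteli uses, and the interleaving/zig-zag construction you describe is his intertwining argument. One small point: you need not make the infinite product of correcting unitaries converge. At each stage you can absorb the conjugation into the choice of the next subalgebra (replace $\mathcal B_{m_k}$ by its unitary conjugate and continue the induction from there), so only finitely many corrections are in play at any level and no limit of unitaries is required; this avoids the summability bookkeeping altogether. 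With that simplification your outline matches the classical proof.
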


Of note is that Elliott proved that the above statement is true if and only if their associated \textit{dimension groups} are isomorphic \cite{Ell}. While much of the theory in this paper can be viewed and proven in the context of dimension groups the authors have chosen not to proceed in this manner. The interested reader can look to \cite{BMT} and \cite{LindMarcus}.


This concept of telescope equivalence is very useful as it allows one to turn the question of $\ast$-isomorphism between AF-algebras into a combinatorial problem. Of course, this does not mean that it becomes easy. In particular, determining if two arbitrary AF-algebras are stably isomorphic is undecidable \cite{GIBP}. However, in the case of stationary Bratteli diagrams, i.e. Bratteli diagrams with constant transition matrices, the problem is decidable \cite{DIP}. Note that while \cite{DIP} deals with the problem of stable isomorphism, two AF-algebras being isomorphic implies that that they are stably isomorphic so the results in that paper are relevant to us. We end this section with a well known consequence of Theorem \ref{BratteliAF}. 

\begin{proposition}\label{telescopeequiv}
Two Bratteli diagrams $(V,E,r,s,d)$, $(V',E',r',s',d')$ are telescope equivalent if and only if there exists a Bratteli diagram $(V'',E'',r'',s'',d'')$ such that telescoping on odd levels leads to a telescoping of one of $(V,E,r,s,d)$ or $(V',E',r',s',d')$ and telescoping on even levels leads to a telescoping of the other. 
\end{proposition}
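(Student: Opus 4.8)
The plan is to treat the two implications asymmetrically. The reverse implication is purely formal and falls straight out of the definitions; the forward implication I would obtain from Theorem \ref{BratteliAF} by running the classical intertwining (``zig-zag'') argument for AF-algebras.

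First the reverse direction. Suppose $(V'',E'',r'',s'',d'')$ is as in the statement. Telescoping it along its odd-numbered levels is, by definition, a telescoping of $(V'',E'',r'',s'',d'')$, hence telescope equivalent to it; by hypothesis this diagram is also (up to isomorphism) a telescoping of one of $(V,E,r,s,d)$ or $(V',E',r',s',d')$, and a diagram is always telescope equivalent to any of its telescopings, so one of the two given diagrams is telescope equivalent to $(V'',E'',r'',s'',d'')$. Telescoping along the even-numbered levels does the same for the other given diagram, and transitivity of telescope equivalence closes the loop.

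For the forward direction, let $\mathcal A=\varinjlim(A_n,\phi_n)$ and $\mathcal B=\varinjlim(B_n,\psi_n)$ be the AF-algebras built from the chain systems that the two diagrams encode. By Theorem \ref{BratteliAF} there is a (unital) $*$-isomorphism $\alpha\colon\mathcal A\to\mathcal B$. I would build an interleaving chain by the standard recursion: using the fact that a finite-dimensional subalgebra of an AF-algebra can be conjugated by a unitary into one of its defining building blocks, first push $\alpha(A_1)$ inside some $B_{k_1}$ (replacing $\alpha$ by the corresponding conjugate, which is harmless since conjugation by a unitary leaves transition matrices, hence the associated Bratteli diagram, unchanged); then push $\alpha^{-1}(B_{k_1})$ inside some $A_{n_2}$ by a unitary in the relative commutant of $A_1$; and continue, at each stage choosing the perturbing unitary in the relative commutant of the algebra placed at the previous stage so that the part of the picture already constructed is left alone. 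This yields strictly increasing indices $1=n_1<n_2<\cdots$ and $k_1<k_2<\cdots$ and unital $*$-homomorphisms $f_i\colon A_{n_i}\to B_{k_i}$, $g_i\colon B_{k_i}\to A_{n_{i+1}}$ with $g_i\circ f_i$ the connecting map $A_{n_i}\to A_{n_{i+1}}$ of the telescoped $\mathcal A$-system and $f_{i+1}\circ g_i$ the connecting map $B_{k_i}\to B_{k_{i+1}}$ of the telescoped $\mathcal B$-system. I would then take $(V'',E'',r'',s'',d'')$ to be the labelled Bratteli diagram of the chain $A_{n_1}\xrightarrow{f_1}B_{k_1}\xrightarrow{g_1}A_{n_2}\xrightarrow{f_2}B_{k_2}\to\cdots$, putting the $A_{n_i}$ at the even levels and the $B_{k_i}$ at the odd levels. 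Telescoping this diagram onto its even levels replaces each triple $A_{n_i}\to B_{k_i}\to A_{n_{i+1}}$ by the paths joining $A_{n_i}$ to $A_{n_{i+1}}$, whose transition matrix is the product of the two transition matrices; by commutativity of the zig-zag this is exactly the transition matrix of the composite $A_{n_i}\to A_{n_{i+1}}$, so the even telescoping is precisely the telescoping of $(V,E,r,s,d)$ onto the levels $n_1<n_2<\cdots$. Symmetrically the odd telescoping is the telescoping of $(V',E',r',s',d')$ onto $k_1<k_2<\cdots$ (this uses, harmlessly, that a telescoping is allowed to discard the initial level, here the vertex $A_{n_1}$). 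Thus $(V'',E'',r'',s'',d'')$ has exactly the required property.

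The step I expect to be the real obstacle is the intertwining recursion itself: arranging that $f_i$ and $g_i$ are honest $*$-homomorphisms for which the triangles commute on the nose, rather than only approximately, is precisely where the absorption of finite-dimensional subalgebras into the building blocks, and the careful placement of the perturbing unitaries in the appropriate relative commutants, are needed. Everything after that — identifying the even and odd telescopings of the resulting diagram with telescopings of the two original diagrams — is a formal computation with transition matrices.
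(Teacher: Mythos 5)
Your proof is correct; note that the paper offers no argument of its own here, simply calling the proposition a ``well known consequence'' of Theorem \ref{BratteliAF}, and your derivation --- the formal reverse direction plus the classical Bratteli--Elliott intertwining recursion to extract an interleaved chain from the $*$-isomorphism --- is exactly the standard way that consequence is obtained. The points you flag (unitary perturbations in relative commutants to make the triangles commute exactly, and the harmless discarding of the initial level when telescoping onto the odd levels) are indeed the only delicate spots, and both are handled correctly.
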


\section{\textbf{Symbolic Substitutions and Bratteli Diagrams}}

Suppose $\sigma$ is a symbolic substitution on a finite set of letters $\mathcal L$. 
To each letter $l_{1},l_{2},...,l_{n}\in \mathcal L$ associate a vertex and call this set $V_{0}$. For each vertex in $v\in V_{0}$, let $d(v)=1$. To each of $\sigma (l_{1}),...,\sigma (l_{n})$ associate a vertex and call this set $V_{1}$. Let the transition matrix be equal to the substitution matrix corresponding to the substitution rule. Continue this process recursively. This will generate a labelled Bratteli diagram associated to the symbolic substitution. In this way, the labeling of the vertex corresponding to $\sigma^n(l_i)$, $n\in \mathbb{N}$ is equal to the length of the word and the number of edges between $\sigma^{n-1}(l_i)$ and $\sigma^{n}(l_j)$ is equal to the number of occurrences of $\sigma^{n-1}(l_i)$ in $\sigma^{n}(l_j)$.

Hence, all of the chain systems defined from this substitution are of the form:\\
\begin{figure}[H]
\begin{tikzcd}
\mathcal A_1=\displaystyle{\bigoplus_{i=1}^{n}}\mathbb{C} \arrow[r, "M"] & \mathcal A_2 \arrow[r, "M"]    & \mathcal A_3 \arrow[r, "M"]    & \mathcal A_4 \arrow[r, "M"]    & \cdots \\
\end{tikzcd}
\end{figure}
Where the $\mathcal A_j\cong \bigoplus_{i=1}^nM_{m_{(j,i)}}$ for some $j,i\in \mathbb{N}$ and $M$ is the substitution matrix. 


The study of symbolic substitutions and their associated Bratteli diagrams is often done in the context of dynamical systems, as seen in papers such as \cite{BIM} and \cite{AA}. The focus of this paper will be studying the substitution matrices associated to these Bratteli diagrams and looking at what remains invariant under telescope equivalence.

\begin{proposition}
Let $M$ and $N$ be substitution matrices on n letters $l_{1},...,l_{n}$. If there exist $p,q\in \mathbb{N}$ such that $M^p = N^q$, then $M$ and $N$ correspond to the same AF-algebra
\end{proposition}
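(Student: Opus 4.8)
The plan is to use Theorem \ref{BratteliAF}, which reduces the claim to showing that the Bratteli diagrams associated to $M$ and $N$ are telescope equivalent. Both diagrams are stationary: the one for $M$ has constant transition matrix $M$ at every level, and likewise for $N$. The key observation is that telescoping a stationary diagram with transition matrix $M$ to every $p$-th level produces a stationary diagram with transition matrix $M^p$ (this is exactly the ``telescoping corresponds to matrix multiplication'' principle already noted after the definition of telescoping). So if $M^p = N^q$, then telescoping the $M$-diagram to every $p$-th level and telescoping the $N$-diagram to every $q$-th level yield diagrams with the same constant transition matrix.

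The one subtlety to address is that a Bratteli diagram is not just a sequence of transition matrices: it also carries the labelling $d$ on vertices, and we must check the two telescoped diagrams agree there too (up to isomorphism). I would handle this as follows. For the $M$-diagram, the level-$0$ labels are all $1$ (a row vector $\mathbf 1$ of ones), and the labels at level $n$ are given by $\mathbf 1 (M^T)^n$, i.e. the vector obtained by repeatedly applying the transpose transition rule; similarly for $N$. After telescoping the $M$-diagram to levels $0, p, 2p, \dots$, the label vector at the $k$-th level of the telescoped diagram is $\mathbf 1 (M^T)^{pk} = \mathbf 1 ((M^p)^T)^k$. Since $M^p = N^q$, this equals $\mathbf 1 ((N^q)^T)^k = \mathbf 1 (N^T)^{qk}$, which is precisely the label vector at the $k$-th level of the $N$-diagram telescoped to levels $0, q, 2q, \dots$. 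Both telescoped diagrams have the same vertex sets (indexed by the $n$ letters), the same labels at every level, and the same constant transition matrix $M^p = N^q$, hence they are isomorphic as Bratteli diagrams.

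Putting this together: the $M$-diagram telescopes to a diagram $\mathcal D$, the $N$-diagram telescopes to a diagram $\mathcal D'$, and $\mathcal D \cong \mathcal D'$. Therefore the $M$-diagram and the $N$-diagram are telescope equivalent (one telescoping, one isomorphism, then un-telescoping), so by Theorem \ref{BratteliAF} their AF-algebras are $*$-isomorphic. The main thing to be careful about is not the matrix bookkeeping, which is routine, but making sure the isomorphism of the two telescoped diagrams is genuinely an isomorphism in the sense of the definition given above, i.e. that the bijections of vertices and edges respect $r$, $s$, and $d$ simultaneously — and the point is that once the transition matrices and all vertex labels coincide, the obvious identity-on-letters bijection does the job.
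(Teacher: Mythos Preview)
Your argument is correct and is essentially the same as the paper's, which dispatches the result in one line by appealing to Proposition~\ref{telescopeequiv}: both stationary diagrams telescope (to every $p$-th, respectively $q$-th, level) to the same stationary diagram with transition matrix $M^p=N^q$, and hence are telescope equivalent. The only difference is that you spell out the verification that the vertex labels also coincide after telescoping, a detail the paper leaves implicit.
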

This proposition follows immediately Proposition \ref{telescopeequiv}.


\begin{proposition}
Let $M$ and $N$ be substitution matrices on $n$ letters. Then they define isomorphic stationary Bratteli diagrams if and only if there exists a permutation matrix $P$ such that $M=P^{-1}NP$
\end{proposition}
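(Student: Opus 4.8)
The plan is to prove both directions by unwinding the definition of isomorphism of Bratteli diagrams in the special case of stationary diagrams. Recall that an isomorphism of Bratteli diagrams consists of bijections between the vertices and between the edges at each level that intertwine the range, source, and labelling maps. First I would set up notation: since both $M$ and $N$ are $n \times n$ substitution matrices, the associated stationary diagrams have exactly $n$ vertices at each level $V_k$ for $k \geq 0$, all vertices in $V_0$ carry label $1$, and the transition matrix between consecutive levels is $M$ (respectively $N$) at every level.

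For the ``if'' direction, suppose $M = P^{-1}NP$ for a permutation matrix $P$, say $P$ corresponds to the permutation $\pi \in S_n$. I would define the vertex bijection at level $k$ by applying $\pi$ (or its appropriate power/relabelling), check that the number of edges between vertex $i$ at level $k-1$ and vertex $j$ at level $k$ is governed by $M$ on one side and by $N$ after the permutation on the other, and verify the labels match. The point here is that $M = P^{-1}NP$ says precisely that $M_{ij} = N_{\pi(i)\pi(j)}$, which is exactly the combinatorial statement that relabelling the vertices by $\pi$ at every level carries the $M$-diagram to the $N$-diagram. Choosing any bijection on the edge sets consistent with the vertex relabelling completes the isomorphism; one also checks $d$ is preserved, using that the labels at level $k$ are the rows/columns of $M^k$ applied to the all-ones vector, and conjugation by $P$ permutes these accordingly.

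For the ``only if'' direction, suppose the two stationary diagrams are isomorphic. The isomorphism gives, at each level $k$, a bijection $\pi_k$ of the vertex sets. I would first argue that the edge bijections force, for each $k$, the relation $M = P_k^{-1} N P_{k-1}$ (or a transpose/ordering variant thereof), where $P_k$ is the permutation matrix of $\pi_k$ --- this is just the statement that the isomorphism intertwines the two transition matrices level by level. Then, examining level $1$ (where all source vertices have label $1$), and using that the diagram is stationary, I expect to be able to show all the $P_k$ can be taken equal, or at least that the conjugacy class is the same, yielding a single permutation $P$ with $M = P^{-1} N P$.

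The main obstacle I anticipate is the ``only if'' direction: a priori the isomorphism uses a possibly different permutation $P_k$ at each level, and one must show these are compatible enough to extract a single conjugating permutation. The key leverage should be that the diagram is stationary (the transition matrix is the same $M$ at every level) together with the fact that $V_0$ consists of $n$ vertices all labelled $1$; this rigidity should force $P_k M = M P_{k-1}$ style relations to collapse. A secondary subtlety is bookkeeping the ``backwards'' composition convention for substitution matrices noted earlier in the paper, so that the transition matrices of the diagram are indeed $M$ and $N$ rather than their transposes, and the conjugation comes out as $M = P^{-1}NP$ rather than with transposes; I would state this carefully at the outset to avoid sign/transpose errors.
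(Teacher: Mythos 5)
Your proposal follows essentially the same route as the paper: the forward direction takes the single permutation as the vertex bijection at every level, and the converse extracts a level-wise permutation $P_k$ from the isomorphism, obtaining $N = P_k^{-1}MP_{k+1}$, and then argues these collapse to a single conjugating $P$. The one step you flag as an anticipated obstacle --- showing the $P_k$ can be taken equal --- is precisely the point the paper itself dispatches in a single sentence (asserting the permutations are identical, or can be taken to be identical when $M$ is singular), so your sketch is at the same level of detail as the published argument.
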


\begin{proof}
Suppose $M$ and $N = P^{-1}MP$ are $n\times n$ substitution matrices for some permutation matrix $P$. This induces isomorphic Bratteli diagrams by considering $P$ as the bijection between vertices of the $n$th level. 

For the converse, Let $M$ and $N$ define isomorphic Bratteli diagrams. There exists a bijection between the vertices and edges which preserves the range, source and labels. Focusing on the $n$th level of vertices, this bijection can be represented by a permutation matrix $P_n$. Hence, $N = P_n^{-1}MP_{n+1}$ which then gives that all of the permutations are identical, or can be taken to be identical in the case when $M$ is singular.
\end{proof}

Of course, it is not the case in general that if two Bratteli diagrams are telescope equivalent and have transition matrices of the same size, that $M= P^{-1}N^{\frac{p}{q}}P$. Take for example:

$M=\begin{bmatrix}
3 & 3 \\ 3 & 3
\end{bmatrix}$
$N=\begin{bmatrix}
2 & 2 \\ 4 & 4
\end{bmatrix}$.
By noting that $A=\begin{bmatrix}
3 \\ 3
\end{bmatrix}\begin{bmatrix}
1 & 1
\end{bmatrix}$ and $B=\begin{bmatrix}
2 \\4
\end{bmatrix}\begin{bmatrix}
1 & 1
\end{bmatrix}$ and that $[6]=\begin{bmatrix}
1 & 1
\end{bmatrix}\begin{bmatrix}
3 \\ 3 
\end{bmatrix}= \begin{bmatrix}
1 & 1
\end{bmatrix}\begin{bmatrix}
2 \\ 4
\end{bmatrix}$ It is easy to see that both define Bratteli diagrams telescope equivalent to the stationary Bratteli diagram defined by the transition matrix $[6]$, cf. Lemma \ref{lem:makebigger} below. In the context of symbolic dynamics this equivalence is called \textit{elementary strong shift equivalence} \cite[Chapter 7]{LindMarcus}.

\begin{definition}
A Bratteli diagram is called {\bf\em simple} if for any $n\in \mathbb N$ and for any vertex $v\in V_n$, there exists $m\in \mathbb N, m>n$, such that there is a path between $v$ and all of the vertices in $V_m$.
\end{definition}

 In the case of a stationary Bratteli diagram, this implies that the transition matrix is primitive.

\begin{proposition}
Let $M$ be a primitive substitution matrix and let $(V,E,r,s,d)$ be the Bratteli diagram defined by $M$. Let $N$ be another substitution matrix such that its associated Bratteli diagram $(V',E',r',s',d')$ is telescope equivalent to $M$. Then $N$ is also primitive.
\end{proposition}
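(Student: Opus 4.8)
The plan is to route the argument through simplicity of Bratteli diagrams, exploiting the observation recorded just after the definition of \emph{simple}: a stationary Bratteli diagram is simple if and only if its transition matrix is primitive. One implication of this is stated there; the other I would establish by hand, since it is short. Namely, if $M$ is primitive, pick $k$ with all entries of $M^k$ positive; by the construction of the diagram of $M$, the number of paths from $v\in V_n$ to $w\in V_{n+k}$ equals the $(v,w)$-entry of $M^k$, so every vertex of $V_n$ is joined by a path to every vertex of $V_{n+k}$, which is exactly simplicity of $(V,E,r,s,d)$. Along the way I would record the elementary fact, valid in any Bratteli diagram, that if a vertex $v$ is joined by a path to every vertex of $V_m$ then it is joined by a path to every vertex of $V_{m'}$ for all $m'\ge m$; this is immediate by induction on $m'$, since every vertex outside $V_0$ has an incoming edge from the previous level.

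The heart of the proof is then the claim that simplicity is invariant under telescope equivalence. Since telescope equivalence is generated by isomorphisms, telescopings and microscopings, it suffices to check that simplicity is both preserved and reflected by a single telescoping (isomorphisms being trivial). Suppose $(V',E',r',s',d')$ is the telescoping of $(V,E,r,s,d)$ with respect to a strictly increasing sequence $(m_n)$. If $(V,E,r,s,d)$ is simple, then given $v'\in V'_n=V_{m_n}$ the fact above lets us find $l>n$ with $v'$ joined by paths to every vertex of $V_{m_l}$; each such path in $(V,E,r,s,d)$ runs through all intermediate levels $V_{m_{n+1}},\dots,V_{m_{l-1}}$ and hence decomposes into edges of the telescoped diagram, so $v'$ is joined by paths to every vertex of $V'_l$ and $(V',E',r',s',d')$ is simple. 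Conversely, if $(V',E',r',s',d')$ is simple and $v\in V_p$, I would first extend $v$ along outgoing edges up to a vertex $w$ in the next telescoped level $V_{m_k}$ with $m_k\ge p$ (possible because every vertex has an outgoing edge), then apply simplicity of $(V',E',r',s',d')$ to $w$ to reach every vertex of some $V_{m_l}$, and concatenate; hence $(V,E,r,s,d)$ is simple.

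Granting this, the proposition follows at once: the diagram of $M$ is simple by the first paragraph, telescope equivalence carries this to the diagram $(V',E',r',s',d')$ of $N$, and since the latter is stationary with transition matrix $N$, simplicity forces $N$ to be primitive. As an alternative one could instead invoke Theorem~\ref{BratteliAF} together with the standard fact that a Bratteli diagram is simple exactly when its associated AF-algebra is a simple C*-algebra, but the combinatorial route above keeps the argument self-contained. The only real obstacle is a point of discipline rather than difficulty: because a microscoping move runs opposite to a telescoping, it is not enough to show that telescoping preserves simplicity --- one must also show that it reflects simplicity, which is the ``conversely'' half of the middle paragraph, and which is where the outgoing-edge and incoming-edge properties of a Bratteli diagram are used.
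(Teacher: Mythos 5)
Your proposal is correct and follows essentially the same route as the paper: the paper also argues that simplicity of a Bratteli diagram is both preserved and reflected by telescoping (hence invariant under telescope equivalence) and then uses the equivalence, for stationary diagrams, between simplicity and primitivity of the transition matrix. You simply supply the details — the $M^k$ path-count argument, the propagation of connectivity to higher levels, and the ``reflects'' half needed to handle microscoping — that the paper's proof dismisses as clear from the definition.
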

 
\begin{proof} 
It is clear from the definition that a Bratteli diagram being simple implies that its telescoping is also simple. This is because edges in the telescoped Bratteli diagram are made from the paths in the original Bratteli diagram. For the same reason, it is clear that if a Bratteli diagram is not simple, then it's telescoping is also not simple. From here the assertion follows.  
\end{proof}

Every stationary Bratteli diagram with a primitive $n\times n$ transition matrix $M$ will be telescope equivalent to a stationary Bratteli diagram defined by $m\times m$  primitive matrix for any $m>n$. To prove this we first need the following lemma. In Lind and Marcus, this is called \textit{state splitting} \cite[Section 2.4]{LindMarcus}.

\begin{lemma}\label{lem:makebigger}
Let $M$ be an $n\times n$ substitution matrix such that $M=NS$, were $N$ is an $n\times m$ non-negative integer matrix and $S$ is an $m\times n$ non-negative integer matrix. Further assume that the columns of $N$ sum up to one, and none of the rows of $N$ sum up to zero. Then the Bratteli diagram corresponding to $M$ is telescope equivalent to the Bratteli diagram corresponding to $SN$.
\end{lemma}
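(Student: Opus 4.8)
The plan is to exhibit a single Bratteli diagram $(V'',E'',r'',s'',d'')$ that telescopes, on its even levels, to the diagram of $M$, and on its odd levels, to the diagram of $SN$, and then to invoke Proposition \ref{telescopeequiv}. Concretely, I would build the ``interleaved'' stationary-like diagram whose levels alternate between the $n$ vertices carried by $M$ and the $m$ vertices carried by $N,S$: place the level-$0$ vertices of the original $M$-diagram as $V_0''$, then insert a new level $V_1''$ with $m$ vertices, with the transition matrix from $V_0''$ to $V_1''$ given by $S$ and the transition matrix from $V_1''$ to $V_2''$ given by $N$, and continue periodically with period two, using $S$ then $N$ then $S$ then $N$, and so on. Since $M = NS$, telescoping this diagram on the levels $V_0'', V_2'', V_4'', \dots$ collapses each pair of steps $S$ then $N$... wait, one must be careful about the ``backwards'' composition convention: with transition matrices the product along a path is read in the order the path is traversed, so two consecutive steps with matrices $S$ (first) and $N$ (second) telescope to $SN$, and the other pairing, $N$ (first) then $S$ (second), telescopes to $NS = M$. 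So I would arrange the first inserted step to be $N$ if I want the even telescoping to give $M$; I will fix the orientation at the end once the convention is pinned down, and the key point is simply that one parity gives $M$ and the other gives $SN$.

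The first technical step is to check that this interleaved graph is actually a legitimate Bratteli diagram in the sense of the paper's Definition, i.e. that the hypotheses on $N$ are exactly what is needed to make the labelling map $d''$ well defined and consistent. Here the labels on $V_0''$ are all $1$; the label of the $j$-th vertex of $V_1''$ must be $\sum_i S_{ij} \cdot 1 = (\text{column sum of } S)$, so $S$ may be an arbitrary non-negative integer matrix as far as $V_1''$ is concerned, but then the label of the $i$-th vertex of $V_2''$ must be $\sum_j N_{ij} \cdot (\text{$j$-th column sum of } S)$, and this has to equal the label $d(v)$ coming from the $M$-diagram, which follows from $M = NS$ together with the row-sum structure. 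The hypothesis ``columns of $N$ sum to one'' is precisely what guarantees the labels propagate correctly in the microscoping direction: a vertex of $V_1''$ with label $c$ must receive edges whose source labels sum to $c$, and since each vertex of $V_2''$ has a label and each column of $N$ picks out exactly one unit from... this is the crux — I expect the main obstacle to be verifying that with the column-sum-one condition on $N$, the consistency equation $d''(v) = \sum_{r''(e)=v} d''(s''(e))$ holds at every inserted level, so that the interleaved object is genuinely a Bratteli diagram and not just a sequence of matrices. The condition that no row of $N$ is zero ensures $r''^{-1}(v) \neq \varnothing$ for $v$ in the inserted levels, i.e. property 2 of the definition.

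Once the interleaved diagram is shown to be a valid Bratteli diagram, the rest is essentially bookkeeping: telescoping on even indices reproduces (an isomorphic copy of) the stationary $M$-diagram since $M = NS$ (up to convention, $M=NS$ read as the two-step path product), and telescoping on odd indices — starting instead from $V_1''$ — reproduces the stationary diagram with transition matrix $SN$, whose level-$0$ labels are the column sums of $S$; this matches the $SN$-diagram of the statement after the obvious initial microscoping/isomorphism identifying its $V_0$. Applying Proposition \ref{telescopeequiv} then yields that the $M$-diagram and the $SN$-diagram are telescope equivalent. I would also remark that Lemma \ref{lem:makebigger} will be the workhorse for the subsequent result about embedding into $m\times m$ primitive matrices: one applies it with a suitable factorization $M = NS$ chosen so that $SN$ is $m\times m$ and primitivity is preserved (which it is, since telescope equivalence preserves simplicity of the diagram by the earlier proposition).
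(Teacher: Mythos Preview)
Your approach is correct and is essentially the paper's own proof: build the interleaved diagram with first step $N$ (from the $n$ vertices to the $m$ vertices) and second step $S$, then telescope on the two parities to recover the $M$-diagram and the $SN$-diagram respectively. The labelling verification you are circling around becomes immediate once you observe that the column-sum-one hypothesis on $N$ says precisely that $N$ is a valid transition matrix from $\bigoplus_{i=1}^{n}\mathbb{C}$ to $\bigoplus_{i=1}^{m}\mathbb{C}$, so the inserted level again carries all labels equal to $1$ and the consistency at every subsequent level then follows automatically from $M=NS$; with that in hand, dropping the initial level $A_1$ leaves exactly the stationary $SN$-diagram.
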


\begin{proof}
By how $N$ was defined, it is a valid transition matrix between $ \displaystyle{\bigoplus_{i=1}^{n}}\mathbb{C}$ and $ \displaystyle{\bigoplus_{i=1}^{m}}\mathbb{C}$. Therefore the chain system corresponding to $M$ is equal to: 

\[
\begin{tikzcd}
A_{1}=\displaystyle{\bigoplus_{i=1}^{n}}\mathbb{C} \arrow[r, "NS"] & A_{2} \arrow[r,"NS"] & A_{3} \arrow[r,"NS"] & A_{4} \arrow[r,"NS"] & \cdots \\
A_{1}=\displaystyle{\bigoplus_{i=1}^{n}}\mathbb{C} \arrow[r, "N"] & A_{2}'=\displaystyle{\bigoplus_{i=1}^{m}}\mathbb{C} \arrow[r, "S"] & A_{2} \arrow[r,"N"] & A_3' \arrow[r,"S"] & \cdots \\
A_{1}=\displaystyle{\bigoplus_{i=1}^{n}}\mathbb{C} \arrow[r, "N"] & A_{2}'=\displaystyle{\bigoplus_{i=1}^{m}}\mathbb{C} \arrow[r, "SN"] & A_{3}' \arrow[r, "SN"] & A_{4}' \arrow[r] & \cdots
\end{tikzcd}
\]

 which after telescoping off $A_{1},$ we get the stationary Bratteli diagram defined by $SN$.
\end{proof}

\begin{proposition}\label{prop:equivtolarger}

Let $M$ be a primitive substitution matrix on $n$ letters and let $m>n$ be another natural number. Then there exists a primitive substitution matrix $D$ on m letters such that the Bratteli diagram defined by $N$ is telescope equivalent to the Bratteli diagram defined by $D$. 
\end{proposition}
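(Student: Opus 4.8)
The plan is to produce $D$ by a single application of Lemma~\ref{lem:makebigger} (``state splitting''), after first replacing $M$ by a high power of itself. Write $t = m-n\ge 1$; the idea is to split one letter, say $l_1$, into $t+1$ letters. Everything comes down to finding a factorization $M = NS$ with $N$ an $n\times m$ matrix having unit column sums and no zero row, arranged so that the resulting $m\times m$ matrix $SN$ is again primitive; the preliminary passage to a power of $M$ is exactly what makes that primitivity attainable.

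\emph{Step 1: reduce to $M$ strictly positive with large entries.} Since $M$ is primitive, $M^{k_0}$ is strictly positive for some $k_0$; and, excluding the degenerate case $M=[1]$ (for which the statement is in fact false, the AF-algebra of $[1]$ being $\mathbb C$), the Perron--Frobenius eigenvalue satisfies $\lambda_{PF}>1$, so the entries of $M^k$ grow without bound. Choose $k$ so large that every entry of $M^k$ is at least $m$. By Proposition~\ref{telescopeequiv} the Bratteli diagram of $M^k$ is telescope equivalent to that of $M$, and $M^k$ is still a primitive substitution matrix on $n$ letters, so we may and do assume $M$ itself is strictly positive with every entry $\ge m$.

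\emph{Step 2: the factorization.} Let $N$ be the $n\times m$ matrix whose first $n$ columns are the standard basis vectors $e_1,\dots,e_n$ and whose last $t$ columns all equal $e_1$; then every column of $N$ sums to $1$ and no row of $N$ is zero, so Lemma~\ref{lem:makebigger} applies once we exhibit a non-negative integer matrix $S$ of size $m\times n$ with $NS=M$. Take rows $2,\dots,n$ of $S$ to be the corresponding rows of $M$, and take rows $1,n+1,\dots,n+t$ of $S$ to be the $t+1$ terms of a decomposition of row $1$ of $M$ into strictly positive integer vectors (possible because every entry of that row is $\ge m\ge t+1$). One checks immediately that $NS=M$, and that $SN$ is the $m\times m$ matrix obtained from $S$ by repeating its first column $t$ extra times; since every entry of $S$ is positive, $SN$ is strictly positive, hence primitive, and it is plainly a substitution matrix on $m$ letters. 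By Lemma~\ref{lem:makebigger}, the Bratteli diagram of $M$ is telescope equivalent to that of $D:=SN$, as required.

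The delicate point is the primitivity of the enlarged matrix: splitting a letter naively tends to create a zero row (a letter sent to the empty word) or to destroy primitivity, and the remedy is precisely the preliminary inflation of the entries of $M$ so that the split can be carried out with every piece strictly positive. (Sizes $\ge 2$ are harmless, since a primitive matrix of size $\ge 2$ automatically has $\lambda_{PF}>1$; only the scalar $[1]$ genuinely needs to be excluded.)
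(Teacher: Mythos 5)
Your proof is correct and takes essentially the same route as the paper: pass to a power of $M$ so every entry is at least $m$, factor $M=NS$ with $N$ the unit-column-sum ``letter-splitting'' matrix and $S$ a strictly positive decomposition of one row of $M$, and invoke Lemma~\ref{lem:makebigger}; your $N$ differs from the paper's only by a column permutation. You are in fact slightly more careful than the paper on two points worth keeping: the degenerate case $M=[1]$ (where the claim genuinely fails) and the justification of primitivity of $SN$ via strict positivity rather than mere nonvanishing of rows and columns.
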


\begin{proof}
Let $M$=$[m_{ij}]$ be a primitive substitution matrix. Assume that all of the entries of $M$ are greater than or equal to $m$  (if not take a suitable power of $M$). Now define
\[
N = \left[\begin{matrix} 1 & \cdots & 1 & 0_{1,n-1} \\ 0_{n-1,1}& \cdots & 0_{n-1, 1} & I_{n-1} \end{matrix}\right]
\]
\[
S = \left[\begin{matrix} m_{11} - (m-n+1) & \cdots & m_{1n} - (m-n+1) \\
I_{m-n+1,1} & \cdots & I_{m-n+1, 1}
\\ m_{21} & \cdots & m_{2n} 
\\ \vdots && \vdots
\\ m_{n1} & \cdots & m_{nn}
\end{matrix}\right]
\]


Let $D=SN$. Then this will be a valid primitive substitution matrix on $m$ letters as all rows/columns have non-zero entries. Thus by the above lemma, the Bratteli diagrams defined by $M$ and $D$ are telescope equivalent
\end{proof}

To illustrate the proposition consider the following:

\begin{example}

\[
\begin{bmatrix}
1 & 1 \\  1 & 0
\end{bmatrix}^{5}
= \begin{bmatrix}
8 & 5 \\  5 & 3
\end{bmatrix}
= \begin{bmatrix}
1 & 1 & 0 \\  0 & 0 & 1
\end{bmatrix} \begin{bmatrix}
7 & 4  \\  1 & 1 \\ 5 & 3
\end{bmatrix} \quad \textrm{and} \quad
\begin{bmatrix}
7 & 4  \\  1 & 1 \\ 5 & 3
\end{bmatrix} \begin{bmatrix}
1 & 1 & 0 \\  0 & 0 & 1
\end{bmatrix} = \begin{bmatrix}
7 & 7 & 4 \\ 1 & 1 & 1 \\ 5 & 5 & 3
\end{bmatrix}
\]
By the above proposition both of these substitution matrices on 2 and 3 letters respectively define telescope equivalent Bratteli diagrams and thus $\ast$-isomorphic AF-algebras.
\end{example}
\section{\textbf{Bratteli Diagrams and transition matrix rank}}

A natural question one can ask is: given a non-invertible substitution matrix $M$ can we find an invertible substitution matrix $N$ on fewer letters such that the Bratteli diagrams that they define are telescope equivalent. The answer is no. Before we can provide a simple counterexample, we shall need a few things:

\begin{definition}
A \textbf{\textit{uniformly hyperfinite (UHF)}} C*-algebra is a unital C*-algebra for which there exists a chain of unital subalgebras $A_{1}\cong M_{n_1}\subseteq A_{2}\cong M_{n_2}\subseteq \cdots$ dense inside the UHF-algebra were $M_{n_k}$ denotes a full matrix algebra and for all $k\in \mathbb{N}$ $n_{k}|n_{k+1}$. From this definition, it is clear that UHF-algebras are a special type of AF-algebra
\end{definition}

The sequence of positive integers such that $n_{k}|n_{k+1}$ for all k determines a formal product $n=d_{1}d_{2}...,$ where $n_{k}=d_{1}d_{2}...d_{k}$ for all k. This formal product can be written as $n=2^{r_{1}}3^{r_{2}}...$ where $0\leq r_{k} \leq + \infty$ for all k. n is called a \textbf{\textit{generalized integer}} or \textbf{\textit{supernatural number}}. The sequence of full matrix algebras in a UHF-algebra determines a supernatural number. Moreover, two UHF-algebras are $\ast$-isomorphic if and only if they have the same supernatural number. For a review of these concepts see \cite{LMTA}.  

If the Bratteli diagram has a $1\times 1$ transition matrix or is telescope equivalent to one that does, then the resulting AF-algebra is a UHF-algebra. As a result, all symbolic substitutions on 1 letter will correspond to a UHF-algebra whose supernatural number is of the form $p_{1}^{\infty}p_{2}^{\infty}...p_{n}^{\infty}$ for some primes $p_i$.
Obviously any UHF-algebra can be made to start at $\mathbb{C}$, but if they are not of the above form, they will no longer be stationary.
\begin{example}
Consider $M = \begin{bmatrix}
 1 & 2 \\ 1 & 2
 \end{bmatrix} = \begin{bmatrix}1 \\ 1\end{bmatrix}\begin{bmatrix}
 1 & 2
 \end{bmatrix} = CD$ and the following chain system
 \[
 \mathbb C \xrightarrow{C^T}\mathbb{C}\oplus \mathbb{C} \xrightarrow{C} M_{2} \xrightarrow{D} M_{2} \oplus M_{4} \xrightarrow{C} M_{6} \xrightarrow{D} M_{6} \oplus M_{12} \rightarrow ...
 \]
 Telescoping to the odd terms gives the Bratteli diagram associated to $M$ while telescoping on the even terms corresponds to the  $2\cdot 3^{\infty}$ UHF-algebra. By the above explanation we conclude that the Bratteli diagram associated to $M$ cannot be telescope equivalent to the Bratteli diagram associated to a substitution on 1 letter since it is not of the form $p_{1}^{\infty}p_{2}^{\infty}...p_{n}^{\infty}$.
\end{example}
This allows us now to show that a matrix and its transpose do not in general define $*$-isomorphic AF-algebras.
\begin{example} The transpose $M^T = \begin{bmatrix}
 1 & 1 \\ 2 & 2
 \end{bmatrix} = \begin{bmatrix}1 \\ 2\end{bmatrix}\begin{bmatrix}
 1 & 1
 \end{bmatrix} = D^TC^T$ corresponds to the $3^\infty$ UHF-algebra as indicated by the chain system
 \[
 \mathbb C \xrightarrow{C^T}\mathbb{C}\oplus \mathbb{C} \xrightarrow{D^T} M_{3} \xrightarrow{C^T} M_{3} \oplus M_{3} \xrightarrow{D^T} M_{9} \xrightarrow{C^T} M_{9} \oplus M_{9} \rightarrow ...
 \]
 Therefore, the transpose of a substitution matrix need not lead to a telescope equivalent Bratteli diagram.

\end{example}

\begin{proposition}
Let $M$ and $N$ be $n\times n$ substitution matrices that define  telescope equivalent Bratteli diagrams. Then $M$ is invertible if and only if $N$ is invertible. 
\end{proposition}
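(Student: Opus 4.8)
The plan is to use the fact that telescope equivalence, by Proposition \ref{telescopeequiv}, produces a common Bratteli diagram $(V'',E'',r'',s'',d'')$ whose telescoping on even levels is a telescoping of the diagram of $M$ and whose telescoping on odd levels is a telescoping of the diagram of $N$. Since both $M$ and $N$ are $n\times n$ and define \emph{stationary} diagrams, telescoping one of them simply produces a power $M^p$ (respectively $N^q$), still an $n\times n$ matrix. So the common diagram, read off appropriately, gives a factorization of $M^p$ and of $N^q$ through intermediate matrices; writing the even-level transition matrix as $P$ and the odd-level as $Q$ (rectangular in general, say $n\times k$ and $k\times n$), we get $M^p = PQ$ and $N^q = QP$, or the analogous pair with the roles arranged by whichever parity hits $M$ first. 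The key algebraic observation is then: for rectangular $P,Q$ of sizes making both $PQ$ and $QP$ square, $PQ$ is invertible as an $n\times n$ matrix if and only if $QP$ is invertible as a $k\times k$ matrix \emph{and} $k=n$; more usefully here, $\operatorname{rank}(PQ)=\operatorname{rank}(QP)$ when we only track the nonzero spectrum, but since we need genuine invertibility I would instead argue via determinants and ranks directly on the square pieces.

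Concretely, the steps I would carry out are: (1) invoke Proposition \ref{telescopeequiv} to get the intermediate diagram and extract the string of transition matrices $B_1, B_2, \dots$ between consecutive levels of $(V'',E'',r'',s'',d'')$; (2) observe that telescoping to even levels multiplies consecutive blocks to recover a power of one matrix, and to odd levels recovers a power of the other, hence there is an index $j$ and matrices $X = B_j B_{j+1}\cdots$ such that $M^p$ and $N^q$ arise as the two ``interleaved'' products of the same finite list of rectangular nonnegative integer matrices; (3) conclude $M^p$ and $N^q$ have the same rank, because for any conformable matrices $A,B$ one has $\operatorname{rank}(AB\cdots)$ is controlled by the smallest intermediate dimension and the interleaved products share their list of factors up to a cyclic-type rearrangement that preserves this minimum; (4) since $M,N$ are $n\times n$, $M$ is invertible $\iff$ $M^p$ has rank $n$ $\iff$ $N^q$ has rank $n$ (using that the intermediate products agree in rank) $\iff$ $N^q$ is invertible $\iff$ $N$ is invertible.

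I would actually streamline (3) using a cleaner fact: if $M^p = R S$ and $N^q = S R$ for a single rectangular pair $R$ ($n\times k$), $S$ ($k\times n$) — which is exactly what one ``microscoping then telescoping'' step between the two stationary diagrams gives, after possibly replacing $p,q$ by multiples — then $\det(M^p)$ and $\det(N^q)$ are both zero or both nonzero, since $\operatorname{rank}(RS) = \operatorname{rank}(SR)$ whenever these matrices share the rank of $R$ (equivalently: the nonzero eigenvalues of $RS$ and $SR$ coincide with multiplicity, a standard linear-algebra fact). Thus $M^p$ invertible $\iff N^q$ invertible, and since $\det(M^p) = (\det M)^p$, invertibility of $M^p$ is equivalent to invertibility of $M$, and likewise for $N$.

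The main obstacle I anticipate is purely bookkeeping: Proposition \ref{telescopeequiv} gives a common diagram but does not immediately hand over a \emph{single} clean factorization $M^p = RS$, $N^q = SR$; in general one gets a longer alternating chain of rectangular factors, and one must either group them into such a pair or prove the rank statement for an arbitrary finite alternating product. The safe route is to prove the lemma ``$\operatorname{rank}(AB) = \operatorname{rank}(BA)$ for the nonzero part, hence $AB$ ($n\times n$) invertible $\iff$ $BA$ ($m\times m$) invertible and $m=n$'' and then apply it inductively along the chain, each telescoping/microscoping step preserving the property ``the current square matrix is invertible.'' Care is needed because intermediate levels may have dimension $m \neq n$, so one cannot speak of invertibility there — one must phrase the invariant as ``full rank relative to the current level size'' and check it is preserved under both telescoping (matrix multiplication) and microscoping (factorization into nonnegative integer matrices), noting that a factorization of a full-rank square matrix forces the intermediate dimension to be at least $n$ and both factors to have rank $n$.
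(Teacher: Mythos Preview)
Your framework is the same as the paper's: invoke Proposition~\ref{telescopeequiv} to get a common diagram with transition matrices $C_1, C_2, \dots$ so that $C_{2i-1}C_{2i} = M^{p_i}$ and $C_{2i}C_{2i+1} = N^{q_i}$. Where you diverge is in worrying about rectangular $C_i$ and rank/eigenvalue arguments for products $RS$ versus $SR$. That worry is unfounded here and is exactly the simplification you are missing: since both stationary diagrams have $n$ vertices at every level, and every level of the common diagram is a level of one of the two, \emph{each} $C_i$ is already an $n\times n$ matrix. There are no rectangular factors and no intermediate dimensions $k\neq n$ to track.

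Once you see this, the paper's argument is a single line: from the chain one reads off
\[
M^{p_1+p_2} \;=\; C_1C_2C_3C_4 \;=\; C_1\,N^{q_1}\,C_4,
\]
and symmetrically $N^{q_1+q_2} = C_2\,M^{p_2}\,C_5$. Taking determinants (all matrices are square $n\times n$) gives $\det(M)=0 \iff \det(N)=0$. Your ``streamlined'' version $M^p = RS$, $N^q = SR$ is not actually available from the chain (you would need $C_1 = C_3$), and your fallback inductive scheme along telescoping/microscoping steps runs into the problem that intermediate diagrams need not be stationary, so ``the current square matrix'' is not well defined. None of that machinery is needed once you observe the $C_i$ are square; the determinant identity above is the whole proof.
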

\begin{proof}
By Proposition \ref{telescopeequiv} there exists a Bratteli diagram $(V,E,r,s,d)$ such that, without loss of generality, telescoping on even levels is a telescoping of the Bratteli diagram for $M$ and telescoping on odd levels is a telescoping of the Bratteli diagram for $N$. 
Hence, if the chain system for $(V,E,r,s,d)$ is
\[
\mathcal A_1 \xrightarrow{C_1} \mathcal A_2 \xrightarrow{C_2} \mathcal A_3 \xrightarrow{C_3} \cdots
\]
then
\[
\mathcal A_1 \xrightarrow{C_1C_2 = M^{p_1}} \mathcal A_3 \xrightarrow{C_3C_4 = M^{p_2}} \mathcal A_5 \xrightarrow{C_5C_6 = M^{p_3}} \ \cdots
\]
and
\[
\mathcal A_2 \xrightarrow{C_2C_3 = N^{q_1}} \mathcal A_4 \xrightarrow{C_4C_5 = N^{q_2}} \mathcal A_6 \xrightarrow{C_6C_7 = N^{q_3}} \ \cdots
\]
Thus $M^{p_1+p_2} = C_{1}C_{2}C_{3}C_{4}=C_{1}N^{q_{1}}C_{4}$ and $N^{q_1+q_2} = C_{2}C_{3}C_{4}C_{5}=C_{2}M^{p_{2}}C_{5}$ which implies the desired result.
\end{proof}


In general, suppose $M$ and $N$ are $n\times n$ non-negative integer-valued matrices that have telescope equivalent Bratteli diagrams. Note that $M^n$ and $N^n$ still maintain telescope equivalence and no longer have nilpotent Jordan blocks. Thus, by the proof of the last proposition it is immediate that $rank(M^n) = rank(N^n)$.
This then implies that telescope equivalence is not rank preserving but instead preserves the rank of the non-nilpotent part of the substitution matrix.

The following theory is classic and found in various forms in many sources, e.g. \cite{NSB}, but we prove it here as it is fundamental to understanding the telescope equivalence of symbolic substitutions.

\begin{proposition}
Let $M,N$ be two invertable $n\times n$ matrices. Then they define telescope equivalent Bratteli diagrams if and only if there exists an invertible $n\times n$ matrix $J$ with non-negative integer entries, a non-negative integer $m$  and strictly increasing sequences of positive integers $\{k_n\},\{l_n\}$ such that 
\[ J^{-1}M^{k_1} , \ \ M^{-k_n}JN^{l_{n}}, \ \  \textrm{and} \ \ N^{-l_n}J^{-1}M^{k_{n+1}}
\]
are non-negative integer matrices, and
\[
J\left[\begin{matrix} 1 \\ \vdots \\ 1\end{matrix}\right] = N^m\left[\begin{matrix} 1 \\ \vdots \\ 1\end{matrix}\right]
\]
\end{proposition}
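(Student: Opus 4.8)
The plan is to use Proposition~\ref{telescopeequiv} as the backbone: telescope equivalence of the two stationary diagrams means there is a common Bratteli diagram whose even telescoping is a telescoping of the $M$-diagram and whose odd telescoping is a telescoping of the $N$-diagram. Writing its chain system as $\mathcal A_1 \xrightarrow{C_1}\mathcal A_2\xrightarrow{C_2}\cdots$, exactly as in the proof of the previous proposition, I would record the two telescoped chains: $C_1C_2=M^{k_1}$, then $C_2C_3=N^{l_1}$, then $C_3C_4=M^{k_2}$, and so on, where $\{k_n\},\{l_n\}$ are the (strictly increasing, after passing to a subsequence if needed) exponents coming from the respective telescopings. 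The candidate for $J$ is $J:=C_1$; note $J$ is a non-negative integer matrix since it is a transition matrix, and it is $n\times n$ because every $\mathcal A_i$ here has exactly $n$ summands (the two ends of the common diagram are among the telescopings of the stationary diagrams). Invertibility of $J$ follows because $M^{k_1}=C_1C_2$ is invertible by hypothesis, so $C_1$ has a right inverse, and being square it is invertible; likewise $C_2$ is invertible, and inductively all $C_i$ are invertible.

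Next I would verify the three matrix expressions are non-negative integer matrices. We have $J^{-1}M^{k_1}=C_1^{-1}(C_1C_2)=C_2$, a transition matrix, hence non-negative integral. Similarly $M^{-k_n}JN^{l_n}$: here $M^{-k_n}$ should be read via the telescoped chain, where $C_{2n-1}C_{2n}=M^{k_n}$ and $C_{2n}C_{2n+1}=N^{l_n}$ (re-indexing so the $M$-blocks sit at odd positions and $N$-blocks at even positions of the common chain, matching the parity convention in the statement). One computes that the relevant product telescopes to a single $C_i$ or a short product of consecutive $C_i$'s, all of which are transition matrices and therefore non-negative integral; the same bookkeeping handles $N^{-l_n}J^{-1}M^{k_{n+1}}$. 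I would present this as: each of these three expressions, when the powers of $M$ and $N$ are expanded as the appropriate products of the $C_i$, collapses to a product of consecutive $C_i$'s (the non-cancelling ``middle'' of the telescoping), which is a composition of canonical embeddings, hence a non-negative integer matrix. One has to be slightly careful that the exponents align so that the cancellation is exact; this is where passing to a common refinement of the two telescopings (again using Proposition~\ref{telescopeequiv}) and possibly dropping finitely many initial levels is used.

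For the eigenvector-type condition $J\mathbf 1 = N^m\mathbf 1$ (with $\mathbf 1=(1,\dots,1)^T$): the entries of $\mathbf 1$ record the labels $d(v)=1$ at the initial level $V_0$ of the stationary diagram for $N$, and the label vector at level $k$ of that diagram is obtained by repeatedly multiplying by the transition matrix, i.e. it is $(N^T)^{k}\mathbf 1$ — wait, one must be careful about the transpose convention, so instead I would argue directly on labels. The label at a vertex equals the sum of labels of sources of incoming edges (property 3 of a Bratteli diagram); encoding this, the label vector $\bs d_j$ at level $j$ of the common diagram satisfies $\bs d_{j} = C_{j}^{?}\bs d_{j-1}$ in whichever orientation the transition matrices act on labels, and the stationarity of the two original diagrams forces $\bs d$ at the levels where the common diagram agrees with the $N$-diagram to be a power of $N$ applied to $\mathbf 1$. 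Matching the level where $J=C_1$ starts with the corresponding level in the $N$-telescoping yields $J\mathbf 1 = N^m\mathbf 1$ for the appropriate $m\ge 0$. The converse direction is more routine: given such $J$ and exponents, the matrices $J^{-1}M^{k_1}, M^{-k_n}JN^{l_n}, N^{-l_n}J^{-1}M^{k_{n+1}}$ are by hypothesis valid transition matrices, and the condition $J\mathbf 1=N^m\mathbf 1$ guarantees the labels are compatible so that one can assemble an actual common Bratteli diagram realizing both telescopings, invoking Proposition~\ref{telescopeequiv} to conclude telescope equivalence.

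I expect the main obstacle to be the bookkeeping in the forward direction: getting the parity of the telescopings to line up so that (a) every intermediate algebra genuinely has $n$ summands (so all the $C_i$ are square and the inverses make sense), and (b) the products of powers of $M$ and $N$ cancel \emph{exactly} against the $C_i$-expansions, with no leftover factor, which may require replacing $\{k_n\},\{l_n\}$ by subsequences and discarding finitely many leading levels — and checking this discarding does not disturb the label condition, since it only shifts $m$.
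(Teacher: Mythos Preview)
Your approach is the same as the paper's: use the common diagram from Proposition~\ref{telescopeequiv}, take $J$ from the first connecting map, and identify the three expressions in the statement with (products of) the transition matrices $C_i$. The converse direction you sketch is exactly what the paper does. However, in the forward direction there are two concrete gaps in your bookkeeping that the paper resolves explicitly.

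First, the exponents $k_n,l_n$ must be \emph{cumulative}, not the individual block exponents. Writing $C_{2i-1}C_{2i}=M^{p_i}$ and $C_{2i}C_{2i+1}=N^{q_i}$ as in the previous proof, one needs $k_n = p_1+\cdots+p_n$ (plus a shift, see below) and $l_n=q_1+\cdots+q_n$. With your convention ($C_1C_2=M^{k_1}$, $C_3C_4=M^{k_2}$, \ldots) the expression $M^{-k_n}JN^{l_n}$ becomes $M^{-p_n}C_1N^{q_n}$, which has no reason to collapse to a transition matrix; only the cumulative choice makes the cancellation exact.

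Second, your choice $J=C_1$ does not yield the label condition. The algebra $\mathcal A_1$ in the common chain sits at some level $d\ge 0$ of the $M$-diagram, so its label vector is $M^d\mathbf 1$, not $\mathbf 1$; hence $C_1\mathbf 1$ is not the label vector at $\mathcal A_2$. The paper fixes both issues at once by setting $J:=M^dC_1$ (the transition matrix from $\mathcal A_0:=\bigoplus_{i=1}^n\mathbb C$ to $\mathcal A_2$) and $k_n:=d+p_1+\cdots+p_n$. Then $J\mathbf 1$ \emph{is} the label vector at $\mathcal A_2$, and since $\mathcal A_2$ also lies in the $N$-chain this equals $N^m\mathbf 1$ for some $m\ge 0$. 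With this $J$ and these $k_n$ one checks directly that
\[
J^{-1}M^{k_1}=C_2,\qquad M^{-k_n}JN^{l_n}=C_{2n+1},\qquad N^{-l_n}J^{-1}M^{k_{n+1}}=C_{2n+2},
\]
so your ``telescopes to a single $C_i$'' becomes an identity rather than a hope, and no subsequence or level-dropping is needed.
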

\begin{proof}
Assume the Bratteli diagrams are telescope equivalent and suppose we have the same chain system $(\mathcal A_i, C_i)$ as in the previous proof. Define $\mathcal A_0 = \bigoplus_{i=1}^n \mathbb C$. There must exist an integer $d\geq 0$ such that $M^d$ is a transition matrix from $\mathcal A_0$ to $\mathcal A_1$. 
Let $J=M^dC_1$, $k_n = d+ p_1+ \cdots + p_n$ and $l_n = q_1\cdots + \cdots + q_n$.
Then
\[
M^{k_n}C_{2n+1} \ = \ M^dM^{p_1}\cdots M_{p_n}C_{2n+1} \ = \  M^dC_1N^{q_1}\cdots N_{q_n} = JN^{l_n} 
\]
which gives $M^{-k_n}JN^{l_n} = C_{2n+1}.$
Similarly, 
\begin{align*}
M^{k_{n+1}} \ = \ M^dM^{p_1}\cdots M^{p_{n+1}}\ &= \ M^dC_1C_2\cdots C_{2n+1}C_{2n+2}  
\\ &= \ JN^{q_1}\cdots N^{q_{n}}C_{2n+2}\  =\  JN^{l_n}C_{2n+2}
\end{align*}
which gives $J^{-1}M^{k_1}$ and 
$N^{-l_n}J^{-1}M^{k_{n+1}} = C_{2n+2}.$ Lastly, note that $\mathcal A_2$ is a finite-dimensional C$^*$-algebra in the chain system for the $N$ matrix substitution. Hence, there exists a non-negative integer $m$ such that $N^m$ is a transition matrix from $\mathcal A_0$ to $\mathcal A_2$. But so is $J = M^dC_1$ which gives the required condition.

Conversely, suppose $J^{-1}M^{k_1}$, $M^{-k_n}JN^{l_{n}}$, and $N^{-l_n}J^{-1}M^{k_{n+1}}$ are non-negative integer matrices for $J$, $\{k_n\}$ and $\{l_n\}$ as specified and that \[
J\left[\begin{matrix} 1 \\ \vdots \\ 1\end{matrix}\right] = N^m\left[\begin{matrix} 1 \\ \vdots \\ 1\end{matrix}\right]
\]
for some $m\geq 0$. Let $C_1 = J$, $C_2 = J^{-1}M^{k_1}$ and for every $n\in \NN$ define
\[
C_{2n+1} = M^{-k_n}JN^{l_{n}} \ \ \textrm{and} \ \ C_{2n+2} = N^{-l_n}J^{-1}M^{k_{n+1}}.
\]
This implies that
\[
C_{2n-1}C_{2n} = M^{k_n} \ \ \textrm{and} \ \ C_{2n}C_{2n+1} = N^{l_n}.
\]
Define the chain system $(\mathcal A_i, C_i)$ where $\mathcal A_1 = \bigoplus_{i=1}^n \mathbb C$ and the rest of the algebras are uniquely implicitly defined by the $C_i$. 
\end{proof}

\begin{example} In \cite{NSB} it was shown that the following two invertible substitution matrices define telescope equivalent Bratteli diagrams:
\[
M=\begin{bmatrix}
1 & 1 & 0 & 0 & 0 \\
0 & 1 & 1 & 0 & 0 \\
0 & 0 & 1 & 1 & 0 \\
0 & 0 & 0 & 1 & 1 \\
1 & 0 & 0 & 0 & 1 
\end{bmatrix} \quad \textrm{and} \quad 
N=\begin{bmatrix}
0 & 1 & 1 & 0 & 0 \\
0 & 0 & 1 & 1 & 0 \\
1 & 0 & 0 & 0 & 1 \\
1 & 1 & 0 & 0 & 0 \\
0 & 0 & 0 & 1 & 1 
\end{bmatrix}
\]
Do note that $M$ is not equal to a permutation conjugation applied to a rational power of $N$, or vice-versa.

\end{example}


Recall from Theorem \ref{thm:aperiodic}, that a substitution matrix $M$ having an irrational PF-eigenvalue implies that any substitution rule corresponding to that matrix is aperiodic. As it turns out, Bratteli diagrams, and thus the AF-algebras are able to tell the difference between these purely aperiodic substitution matrices and those that are not.  

\begin{proposition}
Bratteli diagrams and thus AF-algebras are able to tell the difference between substitution matrices which are purely aperiodic and those which are not. 
\end{proposition}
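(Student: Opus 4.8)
The plan is to prove that pure aperiodicity is an invariant of the telescope‑equivalence class of the stationary Bratteli diagram; combined with the fact, established above, that primitivity is preserved under telescope equivalence, this is exactly the claim. So let $M$ be purely aperiodic and let $N$ be a substitution matrix whose stationary Bratteli diagram is telescope equivalent to that of $M$; I must show $N$ is purely aperiodic too. Since primitivity is preserved, $N$ is primitive, so $N$ fails to be purely aperiodic only if $\lambda_{PF}(N)$ is rational, i.e.\ (being a root of the monic integer characteristic polynomial of $N$) a positive integer; it is this I shall exclude. By Theorem \ref{BratteliAF} the hypothesis says exactly that the AF-algebras $\mathcal A_M$ and $\mathcal A_N$ are $*$-isomorphic, and I shall separate them by attaching to such an algebra a subgroup of $\mathbb R$ built from its trace.

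The first step is to show that a primitive substitution matrix $P$ yields a \emph{unique} trace $\tau_P$ on $\mathcal A_P$, with an explicit Perron--Frobenius description. A trace restricts to a compatible sequence of nonnegative ``trace vectors'' $t^{(j)}$ on the $j$-th level algebra, obeying $B\,t^{(j+1)}=t^{(j)}$, where $B$ is the (nonnegative integer) transition matrix of the diagram, $\lambda_{PF}(B)=\lambda_{PF}(P)$, normalised by $\sum_v t^{(1)}_v=1$ because $\mathcal A_1=\bigoplus_{i=1}^n\mathbb C$ and the embeddings are unital. Hence $t^{(1)}$ lies in the span of the nonzero generalised eigenspaces of $B$, on which $B$ is invertible, and $t^{(j)}=B^{-(j-1)}t^{(1)}$. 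Examining which eigenvalue dominates $B^{-(j-1)}t^{(1)}$ as $j\to\infty$: an occurring eigenvalue that is not a positive real forces the $t^{(j)}$ eventually to have a negative coordinate, and an occurring positive real one forces its eigenvector to be nonnegative, hence equal to $\lambda_{PF}(B)$ by Perron--Frobenius; since $\lambda_{PF}(B)$ is also the largest‑modulus eigenvalue and is simple, it is the only one occurring and $t^{(1)}$ is a Perron--Frobenius eigenvector of $B$. Normalising, $t^{(j)}=\lambda_{PF}(P)^{-(j-1)}\tilde v$, where $\tilde v\ge 0$ is the Perron eigenvector of $B$ with $\sum_i\tilde v_i=1$.

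Now I would use the standard fact that every projection in the unital AF-algebra $\mathcal A_P$ is unitarily equivalent to a projection of some level algebra $\mathcal A_j$; consequently the subgroup $R_P\le\mathbb R$ generated by $\{\tau_P(p):p\text{ a projection of }\mathcal A_P\}$ is the subgroup generated by the entries of all the vectors $t^{(j)}$, namely $R_P=\bigcup_{k\ge 0}\lambda_{PF}(P)^{-k}G_P$, where $G_P\le\mathbb R$ is generated by $\tilde v_1,\dots,\tilde v_n$ (so $1\in G_P\subseteq R_P$, and the union is nested since $\lambda_{PF}(P)G_P\subseteq G_P$). A $*$-isomorphism $\mathcal A_M\to\mathcal A_N$ carries projections to projections and the unique trace to the unique trace, hence carries $R_M$ onto $R_N$; being subgroups of $\mathbb R$, this gives $R_M=R_N$. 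But if $\lambda_{PF}(N)\in\mathbb N$, the integer matrix $B-\lambda_{PF}(N)I$ has one‑dimensional kernel with a rational basis, so the Perron eigenvector of $B$ is rational and $R_N\subseteq\mathbb Q$; whereas for $M$, $B\tilde v=\lambda_{PF}(M)\tilde v$ with $B$ integer, $\tilde v\neq 0$ and $\lambda_{PF}(M)$ irrational makes some entry of $\tilde v$ irrational, so $R_M\not\subseteq\mathbb Q$. This contradicts $R_M=R_N$, so $\lambda_{PF}(N)$ is irrational and $N$ is purely aperiodic.

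The crux, and the step I expect to require the most care, is the uniqueness of the trace together with its explicit Perron--Frobenius form; this is where primitivity genuinely enters, through the fact that $\lambda_{PF}$ is simple and is the only eigenvalue admitting a nonnegative eigenvector. The remaining ingredients — that a $*$-isomorphism matches unique traces, that the trace-value subgroup of an AF-algebra is computed from its finite-dimensional building blocks, and the rational/irrational dichotomy for the Perron eigenvector of an integer matrix — are routine once the trace is understood.
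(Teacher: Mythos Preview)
Your argument is correct, but it takes a substantially different route from the paper. The paper simply invokes a result of Bratteli--J{\o}rgensen--Kim--Roush \cite{DIP}: for telescope-equivalent stationary diagrams one has $\mathbb{Q}[\lambda_{PF}(M)]=\mathbb{Q}[\lambda_{PF}(N)]$, which immediately separates rational from irrational Perron--Frobenius eigenvalue. You instead build the invariant by hand: from primitivity you extract the unique tracial state on $\mathcal A_P$, identify its values on projections via the (right) Perron eigenvector, and observe that the resulting subgroup $R_P=\bigcup_{k\ge 0}\lambda_{PF}(P)^{-k}G_P\subseteq\mathbb R$ is a $*$-isomorphism invariant that sits inside $\mathbb Q$ exactly when $\lambda_{PF}(P)\in\mathbb Q$. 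This is essentially the image of the ordered $K_0$-group under the unique state, so your proof is a self-contained dimension-group argument in disguise---more work than citing \cite{DIP}, but it avoids any black box and makes explicit \emph{which} numerical datum distinguishes the two cases. The one place to tighten is the uniqueness-of-trace paragraph: your asymptotic analysis of $B^{-(j-1)}t^{(1)}$ is morally right but glosses over the possibility of several non-Perron eigenvalues sharing the minimal nonzero modulus (e.g.\ a complex-conjugate pair) and of nontrivial Jordan blocks; the cleanest fix is to use $M^{k}/\lambda_{PF}^{k}\to v\,w^{T}$ for primitive $M$, which forces $t^{(1)}=M^{j-1}t^{(j)}$ to be a positive multiple of the Perron eigenvector $v$.
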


\begin{proof}
In \cite{DIP}, it was shown that if $M$ and $N$ are two transition matrices defining telescope equivalent Bratteli diagrams, then the extension field $\mathbb{Q}[\lambda_{M}]$ is equal to $\mathbb{Q}[\lambda_{N}]$ were $\lambda_M$ and $\lambda_N$ are the PF-eigenvalues of $M$ and $N$ respectively. If $\lambda_{M}$ is irrational and $\lambda_{N}$ rational, then $\mathbb{Q}[\lambda_{M}]\neq \mathbb{Q}[\lambda_{N}]$, a contradiction. Thus it is the case that stationary Bratteli diagrams defined by purely aperiodic substitution matrices are only telescope equivalent to Bratteli diagrams also defined by purely aperiodic substitution matrices.
\end{proof}

\section{\textbf{Ordered Bratteli diagrams and substitution tilings}}
A disadvantage of Bratteli diagrams in encoding symbolic substitutions is that they ignore much of the substitution's structure. For example, all substitutions with the same substitution matrix end up defining the same Bratteli diagram. One way of making this more selective is to use ordered Bratteli diagrams instead. These define a partial order on the edges, which for us will be induced by the substitution rule.

\begin{definition}
An {\bf\em ordered Bratteli diagram} $(V,E, r,s,d, \leq)$ is a Bratteli diagram $(V,E,r,s,d)$ with a partial order on the edges such that two edges $e,e'$ are comparable if and only if $r(e)=r(e')$. 
\end{definition}

This definition naturally extends to a partial order on the set of all finite paths where two paths $p_1=(e_k,e_{k+1},...,e_{k+l}), \, p_2=(e'_k,e'_{k+1},...,e'_{k+n})$ are comparable if and only if they are between the same levels of vertices and $r(e_{k+n})=r(e'_{k+n})$. In this case, we say that $p_1<p_2$ if and only for some $i$ with $k+1 \leq i\leq k+n$, we have
\[
e_i < e'_i \quad \textrm{and} \quad e_j=e'_j, \ i < j\leq k+n. 
\]

A {\bf\em telescoping} of an ordered Bratteli diagram $(V,E,r,s,d, \leq)$ is a telescoping of the Bratteli diagram $(V,E,r,s,d)$ where the order is induced by the above path comparison. The definition of telescope equivalent ordered Bratteli diagrams follows immediately and we echo Proposition \ref{telescopeequiv} in this context.

\begin{proposition}
Two ordered Bratteli diagrams $(V,E,r,s,d, \leq)$, \\$(V',E',r',s',d', \leq')$ are telescope equivalent if and only if the exists an ordered Bratteli diagram such that $(V'',E'',r'',s'',d'', \leq '')$ such that telescoping on odd levels lead to a telescoping of $(V,E,r,s,d, \leq)$ and telescoping on even levels leads to a telescoping of $(V',E',r',s',d', \leq ')$
\end{proposition}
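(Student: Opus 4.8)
The plan is to mirror the proof of Proposition~\ref{telescopeequiv} exactly, keeping track of the extra data of the order. The unordered version of this statement has already been established (Proposition~\ref{telescopeequiv}), so the entire content of this ordered analogue lies in checking that the order is preserved under all the moves involved: telescopings, microscopings, and isomorphisms. First I would recall that telescope equivalence of ordered Bratteli diagrams is, by definition, generated by ordered telescopings, their inverses (ordered microscopings), and order-preserving isomorphisms, where in each case the order on the telescoped diagram is the one induced by the path-comparison rule described just before the statement. The key structural fact to isolate is that \emph{the induced-order construction is transitive}: if $(V'', E'', \ldots, \leq'')$ is telescoped to $(V', E', \ldots, \leq')$ with the induced order, and $(V', E', \ldots, \leq')$ is further telescoped to $(V, E, \ldots, \leq)$ with the induced order, then the order $\leq$ obtained directly on the two-step telescoping of $(V'', \ldots)$ coincides with $\leq$. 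This is because a path in the composite telescoping is a concatenation of paths-of-paths, and the lexicographic-from-the-top comparison rule is associative under this regrouping: comparing two long paths by first comparing their final blocks, then the previous blocks, etc., gives the same verdict whether or not one inserts intermediate grouping.

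With that lemma in hand, the forward direction is immediate: if $(V,E,\ldots,\leq)$ and $(V',E',\ldots,\leq')$ are telescope equivalent ordered Bratteli diagrams, then by definition (and iterating the transitivity observation above to collapse chains of telescopings into a single one, and similarly for microscopings) there is a common ``refinement'' ordered Bratteli diagram from which both arise by ordered telescopings; one then takes $(V'',E'',\ldots,\leq'')$ to be a suitable microscoping of that common refinement arranged so that, just as in Proposition~\ref{telescopeequiv}, its odd-level telescoping yields a telescoping of the first diagram and its even-level telescoping yields a telescoping of the second. The point is that the unordered version of this already produces such a $(V'',E'',\ldots)$, and by the transitivity lemma the order it carries restricts correctly on both sides. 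The converse direction is trivial: if such a $(V'',E'',\ldots,\leq'')$ exists, then each of the two diagrams is an ordered telescoping of a telescoping of $(V'',E'',\ldots,\leq'')$, hence ordered-telescope equivalent to it, hence to each other.

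The main obstacle — really the only non-formal step — is verifying the transitivity/associativity of the induced order under iterated telescoping, i.e.\ that the lexicographic-from-the-range-vertex comparison rule is compatible with regrouping a path into blocks. Concretely, suppose $p = (e_1,\ldots,e_{2L})$ and $q=(e_1',\ldots,e_{2L}')$ are comparable paths in a diagram $(V'',E'',\ldots)$; telescoping first to odd levels groups them as $((e_1,e_2),(e_3,e_4),\ldots)$, and one must check that comparing these two sequences of pairs lexicographically-from-the-end — where each pair is itself compared by the induced order on the one-step telescoping — gives the same answer as comparing $p$ and $q$ directly. This follows by unwinding the definition: the direct comparison finds the largest index $i$ with $e_i \neq e_i'$ and declares $p<q$ iff $e_i<e_i'$ there; the blocked comparison finds the largest block where the pairs differ, and within that block the largest coordinate where they differ, which is the same index $i$, with the same local comparison — and the local comparison on a telescoped edge is, by definition, exactly the path comparison on its constituent edges. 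I would present this as a short lemma (``the induced order on an ordered Bratteli diagram is unchanged under regrouping the telescoping levels''), prove it in two or three lines by the index-chasing just sketched, and then the proposition follows formally from it together with Proposition~\ref{telescopeequiv}.
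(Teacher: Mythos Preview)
The paper does not actually supply a proof of this proposition: it simply states the result, prefaced by the remark that ``we echo Proposition~\ref{telescopeequiv} in this context.'' Your proposal fills in exactly the details the paper omits, and does so by the intended route --- reducing to the unordered case and verifying that the induced path-order is compatible with regrouping under iterated telescoping --- so there is nothing to correct or contrast.
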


The partial order allows us to consider maximal and minimal edges and paths. Indeed, for any pair of path-connected vertices $v, v'\in V$ there is a unique maximal path and a unique minimal path between $v$ and $v'$, with the possibility that they are the same. This can be extended as follows.



\begin{definition}
Let $(V,E,r,s,d, \leq)$ be an ordered Bratteli diagram. Let $X$ denote the set of all {\bf\em infinite paths} that start from the first level of vertices. We say that an infinite path is {\bf \em maximal} if each edge in the path is maximal. Similarly, we say that an infinite path is {\bf\em minimal} if each edge in the path is minimal. We denote the set of all maximal and minimal infinite paths as $X_{max}$ and $X_{min}$ respectively. 
\end{definition}

From the existence of maximal and minimal paths between any two path-connected vertices mentioned above it is a simple matter to show that $X_{max}$ and $X_{min}$ are always nonempty.



\begin{proposition}\label{minmaxtelescope}
Let $(V,E, r,s,d, \leq)$ be an ordered Bratteli diagram with a telescoping $(V',E',r',s',d' \leq ')$. Then $(V,E, r,s,d, \leq)$ and\\ $(V',E',r',s',d',\leq')$ have the same number of maximal infinite paths and the same number of minimal infinite paths.
\end{proposition}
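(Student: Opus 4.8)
The plan is to set up a bijection between the maximal infinite paths of the original ordered Bratteli diagram and those of its telescoping, and likewise for minimal paths. Since telescoping with respect to a sequence $(m_n)$ is the composition of telescopings that each collapse a single consecutive pair of levels, it suffices to treat the case where the telescoping merges exactly the levels $V_{k-1}, V_k, V_{k+1}$ into a single transition from $V_{k-1}$ to $V_{k+1}$ (and leaves all other levels untouched), and then iterate. So first I would reduce to this one-step case.

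Next I would recall the key structural fact emphasized just before the statement: between any two path-connected vertices $v, v'$ there is a \emph{unique} maximal path and a \emph{unique} minimal path. In the one-step telescoping, an edge $f \in E'_{?}$ from $u \in V_{k-1}$ to $w \in V_{k+1}$ is by definition a length-two path $(e, e')$ in the original diagram with $s(e) = u$, $r(e) = s(e')$, $r(e') = w$; and the order on these composite edges is exactly the path order: $(e,e') < (\tilde e, \tilde e')$ iff $e' < \tilde e'$, or $e' = \tilde e'$ and $e < \tilde e$. Crucially, the maximal composite edge into $w$ is $(e, e')$ where $e'$ is the maximal edge into $w$ and $e$ is the maximal edge into $s(e')$ — i.e. the composite of maximal edges is maximal, and similarly for minimal. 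I would state and verify this little lemma carefully, as it is the crux. Then an infinite path in the telescoped diagram is maximal iff every composite edge in it is maximal iff (by the lemma) the corresponding infinite path in the original diagram has every edge maximal, i.e. is maximal. This gives the bijection $X_{max} \leftrightarrow X'_{max}$; the argument for $X_{min} \leftrightarrow X'_{min}$ is identical with "maximal" replaced by "minimal". Hence the cardinalities agree; iterating over the finitely many one-step telescopings that make up a general telescoping finishes the proof.

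The main obstacle I anticipate is purely bookkeeping: making the "composite of maximal edges is maximal" claim airtight, because one must check that the dictionary order on pairs $(e,e')$ really does put $(e_{\max}, e'_{\max})$ on top, where $e'_{\max}$ is chosen first (it is the more significant coordinate in the path order, being the later edge) and then $e_{\max}$ is the maximal edge among those into the now-fixed vertex $s(e'_{\max})$. One also has to be slightly careful about the indexing of levels in the telescoped diagram versus the original, and about the fact that "maximal infinite path" is defined edgewise (an edge being maximal among edges with the same range), so that maximality is genuinely a local, edge-by-edge condition and the global bijection follows by just taking the product of the local bijections level by level. None of this requires computation, only careful reading of the definitions of the path order and of telescoping of ordered diagrams given above.
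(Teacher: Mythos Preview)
Your key observation---that the composite of maximal edges is the maximal composite edge, and symmetrically for minimal---is exactly what drives the paper's proof, and your verification of this via the lexicographic path order is correct. The paper's argument is more direct: it writes down the obvious bijection $F$ between the full infinite path spaces,
\[
F(e_1,e_2,\dots)=\bigl((e_1,\dots,e_{n_1}),(e_{n_1+1},\dots,e_{n_2}),\dots\bigr),
\]
and then observes in one line that each finite segment $(e_{n_j+1},\dots,e_{n_{j+1}})$ consisting of maximal edges is itself a maximal path, so $F$ carries $X_{\max}$ bijectively onto $X'_{\max}$. No reduction step is needed.

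Your reduction, by contrast, has a genuine (if easily repaired) flaw: a general telescoping along $(m_n)_{n\ge 1}$ is \emph{not} a finite composition of one-step telescopings, so the sentence ``iterating over the finitely many one-step telescopings that make up a general telescoping finishes the proof'' does not hold as stated. What is true is that \emph{within each fixed block} $[m_{n-1},m_n]$ the path length $m_n-m_{n-1}$ is finite, so your two-edge lemma can be iterated finitely many times there to yield the correspondence for that block; assembling the blocks then gives the global bijection. This is precisely the paper's direct argument in disguise. The cleanest fix is simply to state and prove your lemma for finite paths of arbitrary length (an immediate induction on length from your two-edge case) and then apply it blockwise, which collapses your route into the paper's.
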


\begin{proof}
Let $0<n_1<n_2<...$ define the telescoping of $(V',E',r',s',d'\leq ')$. Define a bijective map $F:X_{(V,E)}\rightarrow X_{(V',E')}$ on the infinite paths by
\[
F(e_1,e_2,\dots) = ((e_1,\dots, e_{n_1}), (e_{n_1+1}, \dots, e_{n_2}), \dots).
\]
Suppose $(e_1,e_2,...)$ is maximal then each $e_i$ is maximal by definition. Hence, each path $(e_{n_j+1},\dots, e_{n_{j+1}})$ is also maximal which implies that $F(e_1,e_2,\dots)$ is maximal. The minimal path argument follows identically.
\end{proof}

\begin{corollary}\label{telescopemaxmin}
Let $(V,E,r,s,d,\leq)$, $(V',E',r',s',d,', \leq')$ be two telescope equivalent stationary ordered Bratteli diagrams. Then they have the same number of maximal infinite paths and the same number of minimal infinite paths.
\end{corollary}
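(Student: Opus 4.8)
The plan is to reduce the statement to Proposition \ref{minmaxtelescope} by way of the preceding proposition (the ordered-diagram analogue of Proposition \ref{telescopeequiv}). Given two telescope equivalent stationary ordered Bratteli diagrams $B = (V,E,r,s,d,\leq)$ and $B' = (V',E',r',s',d',\leq')$, that proposition furnishes an ordered Bratteli diagram $B''$ with the property that telescoping $B''$ on its odd levels produces a telescoping of $B$, while telescoping $B''$ on its even levels produces a telescoping of $B'$.

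Write $C$ for the odd-level telescoping of $B''$. By construction $C$ is simultaneously a telescoping of $B''$ and a telescoping of $B$. Applying Proposition \ref{minmaxtelescope} to the pair $(B'',C)$ shows that $B''$ and $C$ have the same number of maximal infinite paths and the same number of minimal infinite paths; applying it to the pair $(B,C)$ shows the same for $B$ and $C$. Chaining these equalities gives that $B$ and $B''$ have the same number of maximal (respectively minimal) infinite paths. Running the identical argument with the even-level telescoping $C'$ of $B''$ in place of $C$, and $B'$ in place of $B$, shows that $B'$ and $B''$ likewise agree in these counts. Transitivity then yields the corollary.

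There is essentially no obstacle here: Proposition \ref{minmaxtelescope} already does the real work, since it exhibits for a single telescoping an explicit bijection between infinite paths that restricts to bijections between the maximal paths and between the minimal paths. The only point worth a moment's attention is that telescope equivalence also permits microscopings and isomorphisms, not merely telescopings; but isomorphisms visibly preserve maximality and minimality of edges and hence of paths, while a microscoping is the inverse of a telescoping, so the bijection of Proposition \ref{minmaxtelescope} applies in the reverse direction as well. Invoking the preceding proposition packages all of this into the single common refinement $B''$, so no separate bookkeeping over a chain of moves is needed. (One may note in passing that stationarity is not actually used in the argument — only the finiteness of each level set of vertices, which is built into the definition of a Bratteli diagram.)
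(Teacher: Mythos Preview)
Your argument is correct and is exactly the route the paper intends: the corollary is left unproven precisely because it follows at once by combining the ordered version of Proposition \ref{telescopeequiv} (the common refinement $B''$) with two applications of Proposition \ref{minmaxtelescope}. Your closing remarks about isomorphisms, microscopings, and the irrelevance of stationarity are accurate and worth noting, but not needed once the common-refinement formulation is invoked.
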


\begin{proposition}
Let $(V,E,r,s,d)$ be a simple Bratteli diagram such that $|E_n|>1$ for infinitely many $n$. Then $X_{min}$ and $X_{max}$ are disjoint.
\end{proposition}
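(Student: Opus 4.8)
The plan is to argue by contradiction. Suppose an infinite path $x=(e_1,e_2,\dots)$ lies in $X_{max}\cap X_{min}$, and write $v_0=s(e_1)$ and $v_n=r(e_n)$ for $n\geq1$ for the vertices it passes through. The first observation I would make is that being simultaneously maximal and minimal forces every $v_n$ with $n\geq1$ to have exactly one incoming edge. Indeed, $e_n$ is then both the largest and the smallest element of $r^{-1}(v_n)$; every edge of $r^{-1}(v_n)$ is comparable to $e_n$ since it shares the range $v_n$, and an edge comparable to something that is at once maximal and minimal must coincide with it. Hence $r^{-1}(v_n)=\{e_n\}$ for every $n\geq1$.

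The second and main step is to upgrade this to $V_n=\{v_n\}$ for every $n\geq1$, and this is where simplicity is the essential ingredient. Given $n\geq1$ and an arbitrary $w\in V_n$, simplicity produces some $m>n$ together with a finite path $p$ from $w$ to $v_m$. I would then trace $p$ backwards one edge at a time: its last edge has range $v_m$, so by the first step it must be $e_m$, with source $v_{m-1}$; the preceding edge then has range $v_{m-1}$, hence equals $e_{m-1}$, with source $v_{m-2}$; and so on down to level $n$, forcing the vertices of $p$ at levels $m,m-1,\dots,n$ to be $v_m,v_{m-1},\dots,v_n$. Since $p$ begins at $w$ at level $n$, this yields $w=v_n$, so $V_n=\{v_n\}$.

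Once both steps are in place the conclusion is immediate: for every $n\geq1$ we get $E_n=r^{-1}(v_n)=\{e_n\}$, so $|E_n|=1$, contradicting the hypothesis that $|E_n|>1$ for infinitely many $n$; therefore $X_{max}\cap X_{min}=\varnothing$. I expect the only slightly delicate point to be the backward trace in the second step, which relies on the uniqueness-of-incoming-edge conclusion of the first step holding at every level between $n+1$ and $m$; simplicity itself is invoked exactly once, merely to route an arbitrary vertex of $V_n$ into the path $x$ at a later level.
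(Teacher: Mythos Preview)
Your proof is correct and takes a genuinely different route from the paper's.

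The paper argues by telescoping: it first telescopes along a subsequence so that every transition matrix between consecutive (new) levels has all entries at least $1$, then telescopes once more along alternate levels to make all entries at least $2$. In that telescoped diagram the maximal and minimal edges between adjacent levels are visibly disjoint, so no path can be both maximal and minimal; one then invokes Proposition~\ref{minmaxtelescope} (preservation of max/min paths under telescoping) to pull the contradiction back to the original diagram.

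Your argument stays in the original diagram and avoids telescoping entirely. The key move is your second step: once $r^{-1}(v_k)=\{e_k\}$ for every $k\geq 1$, any finite path that reaches $v_m$ is forced to be the segment $(e_{n+1},\dots,e_m)$ of $x$, and simplicity routes any vertex $w\in V_n$ to some $v_m$, collapsing every level to a single vertex and every $E_n$ to a single edge. This is more elementary and self-contained than the paper's proof, since it does not rely on the earlier result about telescoping preserving maximal and minimal paths. The paper's approach, on the other hand, illustrates the general philosophy of reducing questions about simple Bratteli diagrams to the case where transition matrices are strictly positive (or even with large entries), a technique that recurs throughout the subject.
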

\begin{proof}
Since $|E_n|>1$ infinitely often, we can choose an increasing sequence $(n_k)_{n_k \in\mathbb{N}}$ so that $|E_{n_k}|>1$ for all $n_k$. Since $(V,E,r,s,d)$ is simple, there exists an increasing subsequence $(n_{k_{l}})_{n_{k_{l}}\in\mathbb{N}}$ so that there exists an edge between $v$ and $v'$ for all $v\in V_{n_{k_{l}}}$, $v'\in V_{n_{k_{l+1}}}$. This guaranties that every entry in the transition matrices at each level is at least 1. Finally, telescoping one last time on every second level, we get that every entry in the transition matrix  is strictly greater than 1. In this telescoping there are at least two edges between any two vertices on adjacent levels, which results in the set of maximal and minimal edges, being disjoint. Therefore, by definition this implies that infinite maximal and minimal paths are also disjoint. If we assume for contradiction that $(V,E,r,s,d)$ had a path that was both maximal and minimal, then as seen in the proof of Proposition \ref{minmaxtelescope}, the telescoping of that path would also be both maximal and minimal, a contradiction. Therefore, the maximal and minimal paths in $(V,E,r,s,d)$ are disjoint.
\end{proof}

In particular, the above proposition implies that the maximal and minimal infinite paths are disjoint for every ordered Bratteli diagram arising from a (nontrivial) symbolic substitution with primitive substitution matrix.

One can also go on to define a properly ordered Bratteli diagram as a simple ordered Bratteli diagram where there is a unique maximal and minimal infinite path. 
Properly ordered Bratteli diagrams correspond to a type of symbolic substitution called a proper substitution. These are substitutions where for every letter $l\in \mathcal L$ and every $n\in \mathbb{N}$, there exist letters $u,v \in \mathcal L$ such that $u$ is the first letter of $\sigma^n(l)$ and $v$ is the last. Proper substitutions have been considered in papers such as \cite{AA}.

To define an ordered Bratteli diagram from a symbolic substitution, the procedure is the same as before, except now the partial order on the edges is induced by the order that the letters appear in the substitution rule. We will label comparable edges with numbers to indicate the order.

\begin{example}\label{OBDSubExample}
Consider the periodic symbolic substitution $\sigma$ on the letters $\{a,b\}$ given by
\begin{align*}
    \sigma(a)&=ab \\
    \sigma(b)&=ba
\end{align*}
This corresponds to the following ordered Bratteli diagram:
\[
\centerline{\includegraphics{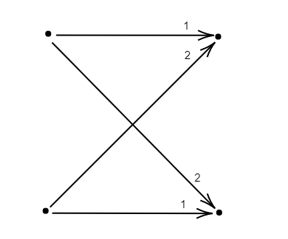}}
\]
\end{example}

As in Section 3, we encode the diagrams into an operator algebra direct limit.

\begin{definition}
Suppose $\mathcal{A} =\overline{\bigcup\limits_{n=1}^{\infty}A_{n}}$ is an AF-algebra. A norm-closed subalgebra $\mathcal T \subseteq \mathcal A$ is called a {\bf\em triangular AF-algebra (TAF-algebra)} if $\mathcal T \cap \mathcal T^* \cap A_n$ is a maximal abelian self-adjoint subalgebra (MASA) of $A_n$, a diagonal, for every $n$. If $\mathcal A$ is a UHF-algebra then $\mathcal T$ is called a TUHF-algebra. From this we see that $\mathcal T$ is the inductive limit of triangular subalgebras of finite-dimensional C$^*$-algebras.
\end{definition}

There is a rich history of research in TAF-algebras and their structure is quite complicated without further assumptions, see \cite{LMTA}. For instance, there are infinitely many not isometrically isomorphic TAF-subalgebras of a given AF-algebra, even in the case where the AF-algebra is simple. We will be focusing our attention on a particularly nice class of TAF-algebras.

\begin{definition}
Suppose $\mathcal T = \varinjlim \mathcal B_{n}$ is a TAF-subalgebra of $\mathcal A =\varinjlim \mathcal A_{n}$. We call $\mathcal T$ a {\bf\em standard TAF-algebra} if for every $n$
\[
\mathcal A_n = \bigoplus_{i=1}^{k_n} M_{m_{i,n}} \quad \textrm{and} \quad  \mathcal B_n = \bigoplus_{i=1}^{k_n} T_{m_{i,n}},
\]
where $T_j$ is the $j\times j$ upper triangular matrices,
and the embeddings $\mathcal B_n \hookrightarrow \mathcal B_{n+1}$ are standard
\[
\bigoplus_{i=1}^{k_n} T_{m_{i,n}} \ \ni \ a_1\oplus\cdots\oplus a_{k_n}  \quad \mapsto \quad
\bigoplus_{j=1}^{k_{n+1}} \left( \oplus_{l=1}^{p_j} \ a_{\rho(j,l)} \right) \ \in \ \bigoplus_{i=1}^{k_{n+1}} T_{m_{i,n+1}}
\]
with $\rho(j,l) \in \{1,\dots, k_n\}$ such that the sizes of the matrices add up correctly.
\end{definition}

This class has been well-studied, standard TAF-algebras are strongly maximal, have $*$-extendable regular embeddings, and support an integer-valued cocycle. Moreover, they are isometrically isomorphic to tensor algebras of C$^*$-correspondences \cite{LimitAlg}, a very special structure property that few TAF-algebras share.
Poon and Wagner \cite{ZTAF} called these standard $\mathbb Z$-analytic TAF-algebras and proved that they completely characterize ordered Bratteli diagrams, echoing the work of Bratteli in Theorem \ref{BratteliAF}, as we will see below.

First, however, we need to consider how to encode an ordered Bratteli diagram $(V,E,r,s,d, \leq)$ into a standard TAF-algebra $\mathcal T = \varinjlim \mathcal B_n$. If $V_n = \{v_1,\cdots, v_{k_n}\}$ then define $\mathcal B_n = \bigoplus_{i=1}^{k_n} T_{d(v_i)}$. Now for $v\in V_{n+1}$ we have $r^{-1}(v) = \{e_1,\dots, e_m\}$ where $e_1 < e_2 < \cdots < e_m$. Thus, define the mapping of $\mathcal B_n$ into $T_{d(v)}$ as 
\[
\bigoplus_{i=1}^{k_n} T_{d(v_i)} \ni a_{v_1} \oplus \cdots \oplus a_{v_{k_n}} \
\mapsto \ a_{s(e_1)}\oplus a_{s(e_2)} \oplus \cdots \oplus a_{s(e_m)} \in T_{d(v)}\subseteq \mathcal B_{n+1}
\]

\begin{example}
Consider the ordered Bratteli diagram arising from a symbolic substitution in Example \ref{OBDSubExample}. We can associate to this the standard TAF-algebra $\varinjlim T_{2^n} \oplus T_{2^n}$ by 
\[
T_{2^n} \oplus T_{2^n} \ \ni\ a \oplus b \ \ \mapsto \ \ (a\oplus b) \oplus (b\oplus a) \ \in \ T_{2^{n+1}} \oplus T_{2^{n+1}}.
\]
\end{example}

This shows how natural it is to take a symbolic substitution and convert it into a standard TAF-algebra. 

\begin{theorem}[Theorem 3.7 \cite{ZTAF}]
Two ordered Bratteli diagrams are telescope equivalent if and only if their associated standard TAF-algebras are isometrically isomorphic.
\end{theorem}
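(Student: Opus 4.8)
The plan is to mimic the proof of Bratteli's Theorem \ref{BratteliAF}, replacing the finite-dimensional C$^*$-algebras with triangular algebras and $*$-isomorphisms with isometric isomorphisms. First I would establish the easy direction: if two ordered Bratteli diagrams are telescope equivalent, then their associated standard TAF-algebras are isometrically isomorphic. By Proposition 3.?? (the ordered analogue of Proposition \ref{telescopeequiv}) it suffices to handle a single telescoping. Telescoping an ordered Bratteli diagram replaces the transition maps $\mathcal B_{n} \hookrightarrow \mathcal B_{n+1} \hookrightarrow \cdots \hookrightarrow \mathcal B_{m}$ by their composite, and since the path order on the telescoped diagram is by definition the order induced on composite paths, the composed embedding is again standard with $\rho$ given by composing the index functions. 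Hence the direct limit triangular algebra is unchanged up to the obvious identification, which is isometric because all the embeddings in sight are isometric $*$-extendable regular embeddings. A microscoping is the same statement read backwards, and an isomorphism of ordered diagrams just relabels the summands, inducing an isometric isomorphism of the limits.

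The substantive direction is the converse: an isometric isomorphism $\Phi : \mathcal T \to \mathcal T'$ of the two standard TAF-algebras must come from a telescope equivalence of the underlying ordered diagrams. Here I would proceed in two stages. Stage one: recover the \emph{unordered} Bratteli diagram data. An isometric isomorphism of TAF-algebras restricts to a $*$-isomorphism of the diagonals' C$^*$-envelopes — more precisely, $\Phi$ maps $\mathcal T \cap \mathcal T^*$ onto $\mathcal T' \cap (\mathcal T')^*$, and the C$^*$-algebras these generate are the ambient AF-algebras $\mathcal A$ and $\mathcal A'$; one shows $\Phi$ extends (or its action is implemented) to a $*$-isomorphism $\mathcal A \to \mathcal A'$. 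Theorem \ref{BratteliAF} then gives a telescope equivalence of the unordered diagrams, i.e. after passing to a common microscoping we may assume $V = V'$, $E = E'$, $r = r'$, $s = s'$ as unordered diagrams, and $\Phi$ is (conjugate to) the identity on the finite-dimensional pieces $\mathcal A_n = \mathcal A_n'$. Stage two: match the orders. Now $\Phi$ carries $\mathcal B_n$ into $\mathcal B_m'$ for suitable $m$, and both are direct sums of upper-triangular matrix algebras sitting inside the \emph{same} $\mathcal A_m$. The key local fact is that a regular, $*$-extendable, order-preserving embedding of $\bigoplus T_{d(v_i)}$ into $T_{d(v)}$ is determined, up to conjugation by a diagonal unitary, by the ordered list $(s(e_1) < \cdots < s(e_m))$ of sources — this is exactly the rigidity of standard embeddings. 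Comparing $\Phi$'s local form with the standard embedding coming from $(V',E',\leq')$ forces the two edge orders to agree after the telescoping, i.e. exhibits the desired telescope equivalence of \emph{ordered} diagrams.

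The key steps, in order, are: (1) reduce to a single telescoping/microscoping/isomorphism in the forward direction and check standardness of composite embeddings is preserved; (2) in the converse, extend the isometric isomorphism to a $*$-isomorphism of the ambient AF-algebras and apply Theorem \ref{BratteliAF} to align the unordered diagrams; (3) prove the local rigidity statement that a standard embedding between triangular summands is determined by its ordered source list up to diagonal unitary equivalence; (4) use (3) to conclude the edge orders coincide after telescoping.

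I expect step (3), the local rigidity, to be the main obstacle, and more precisely the interplay between it and step (2). The subtlety is that the $*$-isomorphism produced in step (2) is only canonical up to inner automorphisms (conjugation by unitaries of $\mathcal A_n$), whereas to read off the order one needs the conjugating unitaries to be \emph{diagonal} — an arbitrary unitary can scramble which matrix units are "above the diagonal." So the real work is to show that an isometric isomorphism of the triangular algebras, as opposed to a mere $*$-isomorphism of the C$^*$-envelopes, forces the implementing unitaries at each level to normalize the diagonal, hence be (a permutation times) a diagonal unitary; the permutation part is absorbed into the isomorphism of ordered diagrams and the diagonal part is harmless. This is where the hypothesis that $\Phi$ is isometric (equivalently, completely isometric, by triangularity) rather than merely bounded is doing all the heavy lifting, and it is the heart of the Poon–Wagner argument.
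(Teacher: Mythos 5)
The paper does not prove this statement; it is quoted verbatim from Poon--Wagner \cite{ZTAF}, so there is no in-paper argument to compare yours against. Your forward direction is fine and is essentially the standard argument: the composite of standard embeddings is the standard embedding associated to the path order (note the order defined in the paper compares paths at the \emph{latest} differing edge, which is exactly the order in which the blocks $a_{s(e)}$ get nested when you compose block-diagonal embeddings), and microscopings and diagram isomorphisms are handled symmetrically.

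The converse, however, has a genuine gap at the start of your Stage two. You write ``Now $\Phi$ carries $\mathcal B_n$ into $\mathcal B_m'$ for suitable $m$,'' but this is false as stated: an isometric isomorphism of direct limits only carries a finite-dimensional piece \emph{approximately} into the union $\bigcup_m \mathcal B_m'$, and the entire technical content of theorems of this type is the Elliott-style approximate intertwining that perturbs $\Phi$ by unitaries close to the identity so that the containment becomes exact. In the triangular setting this perturbation must moreover be carried out by unitaries that normalize the diagonals (otherwise the perturbed map destroys triangularity), and producing such unitaries is not automatic -- it is here, not in your step (3), that the isometric hypothesis and the diagonal-preservation results for TAF algebras (an isometric isomorphism maps $\mathcal T\cap\mathcal T^*$ onto $\mathcal T'\cap(\mathcal T')^*$ and extends to a $*$-isomorphism of the generated AF algebras) do the work. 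Your step (3) itself is essentially correct once stated carefully: the only unitary normalizing the diagonal of $T_k$ and mapping $T_k$ onto itself is, modulo diagonal unitaries, the identity permutation, so a standard embedding is indeed pinned down by its ordered source list. Finally, be aware that the actual Poon--Wagner proof routes through Power's spectrum (the topological binary relation on the diagonal's maximal ideal space) as a complete isometric-isomorphism invariant and identifies that invariant with the ordered-path data, rather than running a direct intertwining of towers; your plan is a legitimate alternative skeleton, but the approximation step you omitted is where it would have to earn its keep.
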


\begin{remark}
There is a strong link between standard TAF-algebras and partial dynamical systems. If one defines a partial homeomorphism on the path space $X\backslash X_{max}$ into $X\backslash X_{min}$, called the Vershik map (essentially the successor map on finite paths extended to the infinite path space), then one gets a partial dynamical system. It is shown in \cite[Theorem 4.16]{TAF} that the partial dynamical systems defined by two ordered Bratteli diagrams are conjugate if and only if the standard TAF-algebras are isometrically isomorphic. When an ordered Bratteli diagram is proper, the Vershik map can be extended to the entire path space. The resulting dynamical system is called a Cantor minimal system. 
\end{remark}




Corollary \ref{telescopemaxmin} provides a useful, but weak method of telling if two stationary ordered Bratteli diagrams are telescope equivalent. If we consider the following ordered Bratteli diagrams with transition matrix $M=\begin{bmatrix} 1 & 1 \\ 1 & 0
\end{bmatrix}$, with maximal paths coloured red, and minimal paths coloured green, we can instantly tell that they are not equivalent as the number of maximal and minimal paths differ.\\
\[
\centerline{\includegraphics{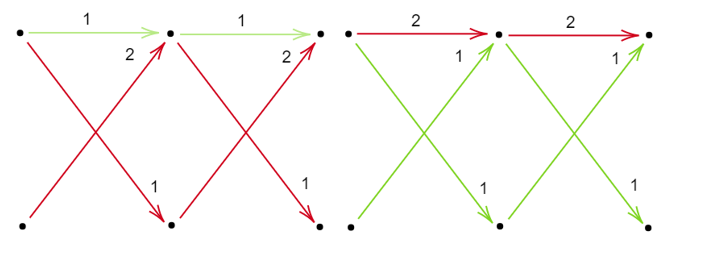}}
\]

Therefore, the two symbolic substitutions below have standard TAF-algebras that are not isometrically isomorphic.
\[
\begin{array}{l} \sigma_1(a) = ab \\ \sigma_1(b) = a \end{array}
\quad \quad \textrm{and} \quad \quad
\begin{array}{l} \sigma_2(a) = ba \\ \sigma_2(b) = a \end{array}.
\]


The number of maximal (minimal) paths may be identical across two different orders on the same substitution, precluding the argument above. However, we may still be able to tell these substitutions apart by way of their ordered Bratteli diagrams.

\begin{example}
Consider the following two orders of the substitution matrix \[
\left[\begin{matrix} 2 & 2 \\ 2 & 2 \end{matrix}\right] = \left[\begin{matrix}  1 \\  1 \end{matrix}\right] \left[\begin{matrix} 2 & 2 \end{matrix}\right] 
\]
which is telescope equivalent in the unordered sense to the substitution matrix $[4]$ on 1 letter.
\[\centerline{\includegraphics[scale=1]{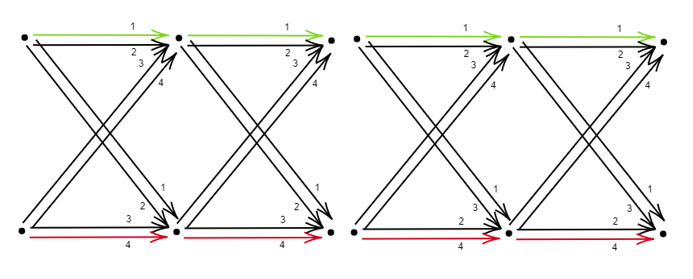}}
\]
While they both have a unique maximal and minimal path, they are not telescope equivalent as ordered Bratteli diagrams, see the end of section 6 in \cite{HPS}.
\end{example}


The previous examples show that the ordered Bratteli diagrams do tell some substitutions apart but we will see that they cannot distinguish among an infinite family.

\begin{example}
Consider the symbolic substitutions on two letters:
\[
\begin{array}{l} \sigma_1(a) = aaab \\ \sigma_1(b) = aaab \end{array}
\quad \quad \textrm{and} \quad \quad
\begin{array}{l} \sigma_2(a) = aabb \\ \sigma_2(b) = aabb \end{array}.
\]
These two substitutions clearly result in distinct tilings but have telescope equivalent ordered Bratteli diagrams as pictured below: 

\[
\centerline{\includegraphics[scale=0.75]{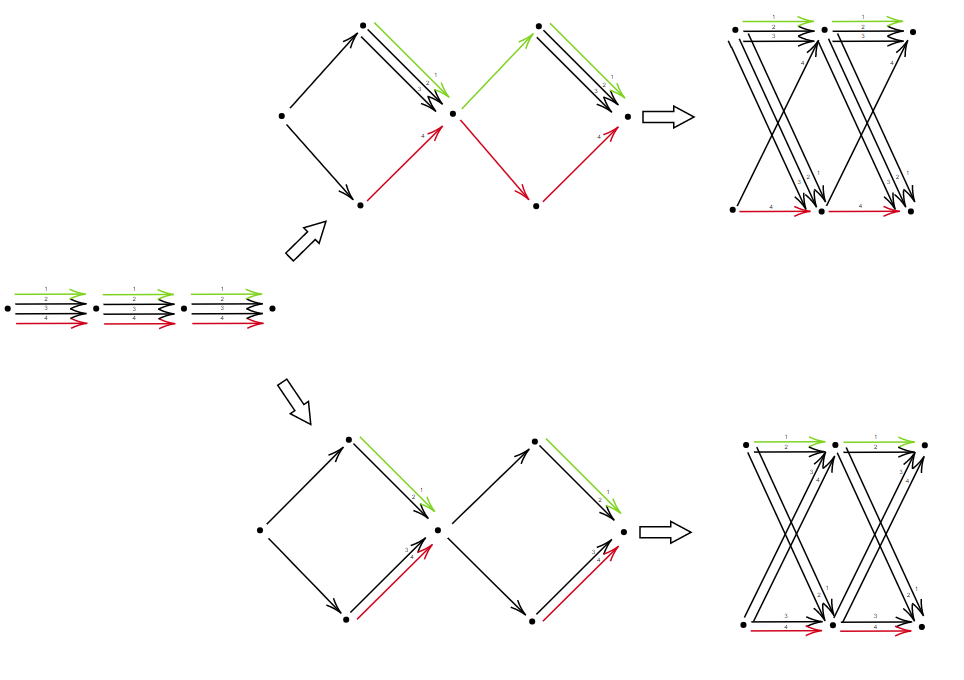}}
\]

Moreover, using the same ideas one can see that for any $k,n\in\mathbb N$ such that $1\leq k\leq n$ we have that the symbolic substitution on letters $\{a_1,\dots, a_k\}$ given by $\sigma$ such that
\[
\sigma(a_1) = \cdots = \sigma(a_k),
\]
where $\sigma(a_1)$ is a word of length $n$,
is telescope equivalent to the symbolic substitution $\sigma'$ on one letter $\{a\}$ such that
\[
\sigma'(a) = \underbrace{a\cdots a}_{n}.
\]
Therefore, two such substitutions have telescope equivalent ordered Bratteli diagrams if and only if their lengths have the same set of prime divisors (since the standard TUHF-algebras are classified by their supernatural numbers).
\end{example}

These ideas can be applied to any substitution but we can also show this collapse of an infinite set of symbolic substitutions another way.

\begin{proposition}
Every symbolic substitution has an ordered Bratteli diagram that is telescope equivalent to the ordered Bratteli diagrams of an infinite number of distinct symbolic substitutions.
\end{proposition}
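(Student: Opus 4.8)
The plan is to show that the ordered Bratteli diagram of $\sigma$ is telescope equivalent to the ordered Bratteli diagram of each of its powers $\sigma^p$, $p\ge 1$, and then to observe that among these powers there are infinitely many pairwise distinct substitutions. The first part carries the real content; the second is a length count.

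The key claim is that telescoping the ordered Bratteli diagram of $\sigma$ at the levels $0,p,2p,3p,\dots$ produces exactly the ordered Bratteli diagram of $\sigma^p$. At the level of unordered Bratteli diagrams this is automatic: the transition matrix of $\sigma$ is the substitution matrix $M$, telescoping corresponds to matrix multiplication, $M^p$ is the substitution matrix of $\sigma^p$, and the level-$kp$ vertex labelled $\sigma^{kp}(l_i)$ is identified with the level-$k$ vertex $(\sigma^p)^k(l_i)$ with matching label (the word length). What must be checked is that the path order induced on the telescoping agrees with the order used to build the ordered Bratteli diagram of $\sigma^p$. I would do this by induction on $p$. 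A length-$p$ path from a level-$0$ vertex $l_a$ to $\sigma^p(l_b)$ selects, by recursively descending into sub-blocks, a unique occurrence of $l_a$ inside $\sigma^p(l_b)=\sigma^{p-1}(l_{c_1})\cdots\sigma^{p-1}(l_{c_m})$, where $\sigma(l_b)=l_{c_1}\cdots l_{c_m}$. The path-order convention makes the last edge the most significant; that edge picks the block index $t$, and the edges into $\sigma^p(l_b)$ from the preceding level are ordered exactly as $l_{c_1},\dots,l_{c_m}$ appear in $\sigma(l_b)$. The remaining length-$(p-1)$ portion of the path then orders occurrences of $l_a$ within the block $\sigma^{p-1}(l_{c_t})$ by the inductive hypothesis. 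Reading these off is precisely the left-to-right enumeration of occurrences of $l_a$ in $\sigma^p(l_b)$, i.e. the order defining the ordered Bratteli diagram of $\sigma^p$. Hence the ordered Bratteli diagrams of $\sigma$ and of $\sigma^p$ are telescope equivalent for every $p$.

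It remains to exhibit infinitely many distinct substitutions among $\sigma,\sigma^2,\sigma^3,\dots$. The image lengths are recorded by $v_p:=M^p\mathbf 1$ with $\mathbf 1=(1,\dots,1)^T$; since $M$ is a nonnegative integer matrix with no zero row, $M$ is monotone and $v_1\ge v_0=\mathbf 1$, so $v_{p+1}=Mv_p\ge v_p$ entrywise and the sequence $(v_p)$ is non-decreasing. Whenever some image length is unbounded, $(v_p)$ takes infinitely many distinct values, and since $\sigma^p\neq\sigma^q$ as soon as $v_p\neq v_q$, this yields infinitely many distinct substitutions, all with telescope equivalent ordered Bratteli diagrams. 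This hypothesis holds for every nontrivial primitive substitution, as then $\lambda_{PF}>1$ and $|\sigma^p(l)|\sim c_l\lambda_{PF}^p\to\infty$ for each $l$, and more generally whenever $M$ is not a permutation matrix together with the standing nontriviality conventions on substitutions used elsewhere in the paper.

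I expect the main obstacle to be the order-preservation induction — lining up the "last edge most significant" path-order convention with the left-to-right reading of occurrences in the iterated word, and verifying that the edge data at every level really is the stationary data coming from $M$. A secondary point requiring a remark is the degenerate regime where all image lengths stay bounded (equivalently $M$ a permutation matrix, such as the letter-swap substitution $a\mapsto b,\ b\mapsto a$): there the powers $\sigma^p$ cycle through only finitely many substitutions, so that case must be excluded by the nontriviality hypothesis already in force, or disposed of separately.
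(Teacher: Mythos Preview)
Your argument is correct and takes a genuinely different route from the paper's. The paper does not use powers at all: it invokes the state-splitting construction of Proposition~\ref{prop:equivtolarger} (Lemma~\ref{lem:makebigger}) to produce, for each $m>n$, a substitution on $m$ letters whose unordered diagram is telescope equivalent to that of $\sigma$, and then remarks that an order can be pushed through the factorization $M=NS$ so that the ordered diagrams are also telescope equivalent. Thus the paper exhibits an infinite family distinguished by alphabet size, whereas you exhibit the family $\{\sigma^p\}_{p\ge 1}$ on the fixed alphabet, distinguished by word length.

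Your route is more elementary and the order-preservation step you sketch is exactly right: the ``last edge most significant'' convention on length-$p$ paths matches the left-to-right reading of occurrences in $\sigma^p(l_j)$ because the top edge selects the block $\sigma^{p-1}(l_{c_t})$ inside $\sigma^p(l_j)=\sigma^{p-1}(l_{c_1})\cdots\sigma^{p-1}(l_{c_m})$, and induction handles the remaining $p-1$ edges within that block. The only soft spot you already flag is the degenerate case where $M$ is a permutation matrix, so that $\{\sigma^p\}$ is finite; this needs the ambient nontriviality assumption. Note that the paper's own proof has an analogous hidden hypothesis, since Proposition~\ref{prop:equivtolarger} is stated for primitive $M$ (it takes a power of $M$ to make entries large), so neither argument literally covers every substitution without some such caveat. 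What the paper's approach buys is the stronger conclusion that the equivalence class contains substitutions on alphabets of every size $\ge n$; what yours buys is a cleaner self-contained argument that avoids the matrix factorization lemma entirely.
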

\begin{proof}
Suppose $\sigma$ is a symbolic substitution on $n$ letters. For every $m>n$, Proposition \ref{prop:equivtolarger} gives a way to construct a symbolic substitution on $m$ letters whose (unordered) Bratteli diagram is telescope equivalent to that of $\sigma$. It is a simple matter to induce an ordering on this new Bratteli diagram that is then telescope equivalent to the original ordered Bratteli diagram.
\end{proof}



We end this paper with a discussion of the possible substitution tilings associated to powers of the Fibonacci substitution matrix, $F = \left[\begin{matrix}1&1\\1&0\end{matrix}\right]$. Because of a notion of ``prime'' matrices we can explicitly say when two such tilings have telescope equivalent ordered Bratteli diagrams.

\begin{proposition}
Suppose there are $2\times 2$ non-negative integer matrices $A,B$ such that $AB = F^m$ for some $m\in \NN$. Then there exists $k,l\in\NN\cup\{0\}$ such that $k+l = m$ and either 
\[
A = F^k, \ B=F^l \quad \textrm{or} \quad A=F^kJ, \ B= JF^l
\]
where $J=\left[\begin{matrix}0&1\\1&0\end{matrix}\right]$.
\end{proposition}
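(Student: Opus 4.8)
The plan is to exploit the arithmetic of $F$ via its determinant and its action on the positive cone. First I would record the two basic constraints: $\det F = -1$, so $\det(AB) = \det(F^m) = (-1)^m$, forcing $\det A \cdot \det B = (-1)^m$; since $A,B$ are non-negative integer matrices this means $\det A, \det B \in \{1,-1\}$, so both $A$ and $B$ are invertible over $\ZZ$. In particular $A = F^m B^{-1}$, and $A^{-1} = B F^{-m}$ is also integral. So $A$ is a non-negative integer matrix lying in the group generated by $F$ and $GL_2(\ZZ)$-integrality constraints; the real content is that $A$ must be a non-negative power of $F$, possibly composed with the flip $J$.

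The key step is a structural description of which non-negative integer matrices $A$ admit a non-negative integer matrix $C$ with $AC = F^m$. I would argue as follows. Since $F$ is primitive with irrational PF-eigenvalue $\varphi = (1+\sqrt5)/2$, the matrix $F^m$ has PF-eigenvector $w = \left[\begin{smallmatrix}\varphi\\1\end{smallmatrix}\right]$ (up to scaling), and $F^m w = \varphi^m w$. From $AB = F^m$ we get that $B$ maps the cone spanned by eigenvectors in a controlled way; more concretely, consider the vector $v = B w$. Then $A v = F^m w = \varphi^m w$, so $A$ has $v$ as an eigenvector with eigenvalue $\varphi^m$ — but a $2\times2$ non-negative integer matrix can have the eigenvalue $\varphi^m$ only if its characteristic polynomial is that of $F^m$ (an integer polynomial with root $\varphi^m$ of degree exactly $2$ since $\varphi^m$ is a quadratic irrational), which pins down $\operatorname{tr}(A)$ and $\det(A) = \pm1$ to exactly the values $\operatorname{tr}(F^k)$, $\det(F^k)=(-1)^k$ for the appropriate $k$ with $\varphi^m$ NOT an eigenvalue unless... — here I need to be careful: $A$ need not have $\varphi^m$ as an eigenvalue, it has $\varphi^m$ as the eigenvalue associated to the specific eigenvector $v$. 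So instead I would track the left PF-eigenvector: letting $u^T = [\varphi\ \ 1]$ be the left PF-eigenvector of $F$, so $u^T F^m = \varphi^m u^T$, we get $u^T A B = \varphi^m u^T$, hence $(u^T A)B = \varphi^m u^T$; combined with non-negativity of $A$ and $B$ this will force $u^T A = \varphi^k u^T$ for some $0 \le k \le m$, i.e. $A^T$ has $u$ as eigenvector with eigenvalue a power of $\varphi$. Then $A^T u = \varphi^k u$ together with $A^T$ non-negative integral, $\det A^T = \pm1$, and the known spectral radius being $\varphi^k$ forces $A^T$, hence $A$, to be conjugate to $F^k$ — and an explicit finiteness check (only finitely many non-negative integer matrices have a given spectral radius and determinant $\pm 1$) identifies $A$ as $F^k$ or $F^kJ$. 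The exponent on $B$ is then $l=m-k$ by taking determinants and traces, and the $J$ case pairs up because $J^2 = I$ and $JFJ = F^{-1}$-conjugate... actually $JF^kJ$ is the transpose of $F^k$, which is $F^k$ itself only when... — I will need the identity $F J F \cdots$; cleaner is: if $A = F^k J$ then $B = A^{-1}F^m = J F^{-k} F^m = J F^{m-k} = J F^l$, which is exactly the claimed form.

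The main obstacle I anticipate is making rigorous the jump "$u^T A B = \varphi^m u^T$ and $A,B \ge 0$ integral $\Rightarrow$ $u^T A$ is a $\varphi$-power multiple of $u^T$." This is the crux and is really a statement that the monoid of non-negative integer matrices dividing powers of $F$ (in the sense $AB=F^m$) is freely generated, up to the flip, by $F$ itself — a "primality of $F$" phenomenon, as the paragraph preceding the proposition hints. I would establish it by induction on $m$: for $m=1$, check directly that $AB = F$ with $A,B\ge 0$ integral forces $\{A,B\} = \{I,F\}$ or $\{J, ?\}$ — but $\det$ rules out the flip since $\det F = -1 = \det I \cdot \det F$ only, while $J$ would need its partner to have the right product, and one checks no such non-negative factorization exists except the trivial one (here the non-negativity is essential — there is no non-negative integer matrix strictly between $I$ and $F$ in the divisibility order). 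For the inductive step, given $AB = F^m$ with $m \ge 2$, use that $F^m = F \cdot F^{m-1}$ and a Euclidean-type argument: the first column of $F^m$ is $\left[\begin{smallmatrix}f_{m+1}\\f_m\end{smallmatrix}\right]$ and reading off column sums / the action on $\left[\begin{smallmatrix}1\\0\end{smallmatrix}\right]$ versus $\left[\begin{smallmatrix}0\\1\end{smallmatrix}\right]$ constrains the last row or column of $A$ enough to peel off one factor of $F$ (or $J$) and reduce to $m-1$. I expect this peeling-off lemma, rather than the eigenvector bookkeeping, to be where the real work lies, and I would isolate it as a separate lemma before proving the proposition.
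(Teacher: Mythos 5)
You correctly identify the crux --- a ``primality of $F$'' statement for the monoid of non-negative integer matrices --- but you do not prove it, and both routes you sketch break down. On the eigenvector route: the claim that non-negativity forces $u^TA=\varphi^k u^T$ is false precisely in the twisted case that the proposition must allow; if $A=F^kJ$ then $u^TA=\varphi^k\,[\,1 \ \ \varphi\,]$, which is not a scalar multiple of $u^T=[\,\varphi \ \ 1\,]$, so at best one could hope for a dichotomy ($u^TA$ equals $\varphi^ku^T$ or $\varphi^ku^TJ$), and establishing that dichotomy is exactly the content of the proposition rather than a lemma one can assume. On the inductive route: your base case is wrong. You assert that the determinant rules out the flip, but $\det J=-1$ and $\det(JF)=1$, so $\det(J)\det(JF)=-1=\det F$, and indeed $F=J\cdot(JF)$ with $JF=\left[\begin{smallmatrix}1&0\\1&1\end{smallmatrix}\right]$ is a perfectly good non-negative factorization --- it is the case $k=0$, $l=1$ of the proposition's own conclusion, so any argument excluding it proves a false statement. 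The peeling-off lemma for the inductive step is likewise only anticipated, not supplied.

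The paper's proof avoids all of this by invoking the Mackinnon--Rivett unique factorization theorem: every non-negative integer matrix of determinant $1$ factors uniquely as a word in $P=\left[\begin{smallmatrix}1&0\\1&1\end{smallmatrix}\right]=JF$ and $Q=\left[\begin{smallmatrix}1&1\\0&1\end{smallmatrix}\right]=FJ$. Writing $F^{2m'}=(QP)^{m'}$ in the even case, and $JF^{2m'+1}=P(QP)^{m'}$ in the odd case (first replacing $A,B$ by $AJ,JB$ when their determinants are $-1$), the equation $AB=F^m$ becomes a deconcatenation of this word, and the two possible cut points --- between a $P$ and the following $Q$, or between a $Q$ and the following $P$ --- give exactly the two alternatives $A=F^k,\ B=F^l$ and $A=F^kJ,\ B=JF^l$. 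The honest version of your peeling-off lemma is essentially a reproof of that unique factorization theorem, so you should either prove it in full or cite it.
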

\begin{proof}
In \cite{PM} it is shown that every $2\times 2$ non-negative integer matrix with determinant 1 has a unique factorization into products of 
\[
P = \left[\begin{matrix}1&0\\1&1\end{matrix}\right] = JF \quad \textrm{and} \quad Q = \left[\begin{matrix}1&1\\0&1\end{matrix}\right] = FJ,
\]
with the usual convention that $I = P^0Q^0$. These two matrices then act as primes in this context.

Now suppose $AB = F^m, m\in \NN$, with $A,B$ $2\times 2$ non-negative integer matrices. This implies that both $|\det(A)| = |\det(B)| = 1$.

\vskip 12 pt
\noindent {Case 1: $m=2m'$:} Then
\[
AB = F^m = (FJJF)^{m'} = (QP)^{m'}.
\]
If $\det(A) = \det(B) = 1$ then by the unique factorization theorem \cite{PM} we find $A$ and $B$ simply by splitting $(QP)^{m'}$ into two parts (deconcatenation).
So either 
\[
A = (QP)^{k'} = F^{2k'}, \ B = (QP)^{l'} = F^{2l'}, \ 2k' + 2l' = 2m' = m
\]
or
\[
A=(QP)^{k'}Q = F^{2k'+1}J, \ B = P(QP)^{l'} = JF^{2l'+1}, \ 2k' + 2l' + 2 = 2m' = m
\]
On the other hand, if $\det(A) = \det(B) = -1$ then we can repeat the above argument with 
$(AJ)(JB) = F^m = (QP)^{m'}$ which again yields the desired conclusion.

\vskip 12 pt
\noindent {Case 2: $m = 2m'+1$:} Then
\[
JAB = JF^m = JF(FJJF)^{m'} = P(QP)^{m'}.
\]
Using the same argument as in the even case, via deconcatenation, yields the desired result.
\end{proof}

\begin{theorem}
Suppose $\sigma_M$ and $\sigma_N$ are symbolic substitutions with associated matrices $M = F^m$ and $N=F^n, m,n\in\NN$.
Then they have telescope equivalent ordered Bratteli diagrams if and only if there exist symbolic substitutions $\sigma_i$ with substitution matrices $F^{l_i}$ and positive integers $k, p_i,q_i, i\in\NN$, such that
\[
l_1 = kn, \ \ \sigma_{2i}\circ\sigma_{2i-1} = \sigma_M^{p_i} \ \ \textrm{and} \ \ \sigma_{2i+1}\circ\sigma_{2i} = \sigma_N^{q_i}
\]
\end{theorem}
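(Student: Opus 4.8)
The plan is to prove both implications through the ordered analogue of Proposition~\ref{telescopeequiv} — the existence of a common ordered Bratteli diagram whose odd-level telescoping refines one of the two diagrams and whose even-level telescoping refines the other — together with the factorization of powers of $F$ from the preceding proposition. Throughout I will use the standard compatibility between substitution composition and telescoping of ordered Bratteli diagrams: if $\tau_1,\tau_2$ are symbolic substitutions, then the order on the once-telescoped ordered Bratteli diagram of the chain $\tau_1,\tau_2$ is exactly the letter order of $\tau_2\circ\tau_1$ (recall matrices, hence diagrams, compose backwards).

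For the forward direction, given the $\sigma_i$, $k$, $p_i$, $q_i$ as in the statement, form the ordered Bratteli diagram $\mathcal B''$ defined by the chain $\bigoplus_{i=1}^2\mathbb C \xrightarrow{\sigma_1}\mathcal A_1\xrightarrow{\sigma_2}\mathcal A_2\xrightarrow{\sigma_3}\cdots$, each edge set carrying the order of the corresponding $\sigma_i$. Telescoping $\mathcal B''$ onto its even levels produces the ordered transitions $\sigma_2\circ\sigma_1=\sigma_M^{p_1},\ \sigma_4\circ\sigma_3=\sigma_M^{p_2},\dots$ starting from $\bigoplus_{i=1}^2\mathbb C$, that is, the ordered Bratteli diagram of $\sigma_M$ telescoped at $\{0,p_1,p_1+p_2,\dots\}$. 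Telescoping onto the odd levels produces $\sigma_3\circ\sigma_2=\sigma_N^{q_1},\ \sigma_5\circ\sigma_4=\sigma_N^{q_2},\dots$; since the matrix of $\sigma_1$ is $F^{l_1}=F^{kn}=N^k$, the algebra $\mathcal A_1$ has the same dimension vector as level $k$ of the ordered Bratteli diagram of $\sigma_N$, so this is that diagram telescoped at $\{k,k+q_1,k+q_1+q_2,\dots\}$ (dropping the first $k$ levels, which telescoping permits, cf.\ Lemma~\ref{lem:makebigger}). The ordered analogue of Proposition~\ref{telescopeequiv} then yields telescope equivalence of the two ordered Bratteli diagrams.

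For the converse, assume these ordered Bratteli diagrams are telescope equivalent. The interleaving construction behind Proposition~\ref{telescopeequiv}, arranged (as it always can be, by passing further along the chains) so that its first level is $\bigoplus_{i=1}^2\mathbb C$ and all gaps are positive, produces a common ordered Bratteli diagram $\mathcal A_0''\xrightarrow{C_1}\mathcal A_1''\xrightarrow{C_2}\cdots$ with $\mathcal A_0''=\bigoplus_{i=1}^2\mathbb C$, whose even-level telescoping is the ordered Bratteli diagram of $\sigma_M$ and whose odd-level telescoping is a telescoping of that of $\sigma_N$; thus $C_{2i-1}C_{2i}=\sigma_M^{p_i}$ and $C_{2i}C_{2i+1}=\sigma_N^{q_i}$ as ordered transitions for suitable $p_i,q_i\geq 1$, with $\mathcal A_1''$ equal to level $k$ of the diagram of $\sigma_N$ for some $k\geq 1$. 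Each $C_i$ is $2\times 2$, and every consecutive product $C_iC_{i+1}$ is a power of $F$ with exponent $\geq 1$. The crux is to show every $C_i$ is itself a power of $F$: applying the preceding proposition to $C_iC_{i+1}$, the only alternative to $C_i,C_{i+1}$ both being powers of $F$ is a twisted factorization with $C_{i+1}=JF^b$; a Fibonacci-entry comparison shows that for $b\geq 1$ the matrix $JF^b$ is neither of the form $F^c$ nor $F^cJ$, which contradicts its being a left factor of the power of $F$ equal to $C_{i+1}C_{i+2}$, while the leftover case $C_{i+1}=J$ forces $C_{i+2}=JF^{b'}$ with $b'\geq 1$ and so reduces to the previous case. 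Hence $C_i=F^{l_i}$ for all $i$. Letting $\sigma_i$ be the substitution on two letters with matrix $F^{l_i}$ and letter order read off from the order on the edges $C_i$, the matrices of $\sigma_{2i}\circ\sigma_{2i-1}$ and $\sigma_M^{p_i}$ agree (the exponents sum to $mp_i$) and so do their orders (the path order of $(C_{2i-1},C_{2i})$ is by construction the order of the telescoped transition $\sigma_M^{p_i}$), so $\sigma_{2i}\circ\sigma_{2i-1}=\sigma_M^{p_i}$, and likewise $\sigma_{2i+1}\circ\sigma_{2i}=\sigma_N^{q_i}$. Finally, comparing the dimension vector of $\mathcal A_1''$ computed as $F^{l_1}(1,1)^T$ with its value $N^k(1,1)^T$ as level $k$ of $\sigma_N$'s diagram, and using that $j\mapsto F^j(1,1)^T$ is injective, gives $l_1=kn$.

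The main difficulty is this crux step of the converse: eliminating every occurrence of $J$ among the transitions $C_i$. It depends essentially on the chain being infinite and on the rigidity supplied by the preceding proposition, and it requires care with the borderline factorization $C_{i+1}=J$, which produces a contradiction only two levels downstream rather than immediately. The remaining points are routine bookkeeping: running the interleaving so that it starts at $\bigoplus_{i=1}^2\mathbb C$ with positive gaps, and confirming that telescoping an ordered Bratteli diagram matches composing the associated substitutions.
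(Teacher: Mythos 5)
Your proposal is correct and follows essentially the same route as the paper: both directions via the interleaved common ordered diagram, with the key step of forcing each intermediate transition $C_i$ to be a power of $F$ by playing the factorization proposition for $C_{i-1}C_i$ against that for $C_iC_{i+1}$. Your treatment is in fact slightly more careful than the paper's, which dispatches the twisted case by a bare appeal to unique factorization and does not explicitly verify the $l_1=kn$ condition or the borderline case $C_{i+1}=J$.
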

\begin{proof}
The backward direction is immediate as the condition just says that they have equal telescopings as ordered Bratteli diagrams.

For the forward direction, assume that $M$ and $N$ lead to telescope equivalent ordered Bratteli diagrams. This section this implies that there are symbolic substitutions with  matrices $C_i,i\in \NN$ and $p_i,q_i\in\NN, i\in\NN$, such that
\[
C_{2i-1}C_{2i} = M^{p_i} \quad \textrm{and} \quad C_{2i}C_{2i+1} = N^{q_i}.
\]
Ignoring the orders we see that
\[
C_{2i-1}C_{2i} = F^{mp_i} \quad \textrm{and} \quad C_{2i}C_{2i+1} = F^{nq_i}.
\]
The previous proposition then gives that there exists $r_i,l_i\in\NN\cup\{0\}$ such that
\[
C_{i} = F^{l_i}, \ C_{i+1}=F^{r_{i+1}} \quad \textrm{or} \quad C_{i}=F^{l_i}J, \ C_{i+1}= JF^{r_{i+1}}.
\]
This gives potentially several descriptions for $C_i, i\geq 2$, namely $F^{r_i}$ or $JF^{r_i}$, and $F^{l_i}$ or $F^{l_i}J$. By the unique decomposition of \cite{PM} the only option is to have $r_i=l_i$ and $C_i = F^{l_i}$.
Note that this also implies that $C_1 = F^{l_1}$ by the form $C_2$ takes.
Therefore, $C_i = F^{l_i}, i\in\NN$.
\end{proof}

The equivalence relation given by telescope equivalent ordered Bratteli diagrams is then an equivalence relation on symbolic substitutions of powers of the Fibonacci substitution matrix. What is this equivalence relation? The discussion about maximal and minimal paths showed that the two symbolic substitutions on $F$ itself are not equivalent. Therefore, this equivalence is not local indistinguishability.

On the other hand, the equivalence relation given by telescope equivalent ordered Bratteli diagrams is not just equality of substitution. Consider the following example.

\begin{example}
Consider the two symbolic substitutions on two letters corresponding to the Fibonacci matrix $F$:
\[
\begin{array}{l} \sigma_1(a) = ab \\ \sigma_1(b) = a \end{array}
\quad \quad \textrm{and} \quad \quad
\begin{array}{l} \sigma_2(a) = ba \\ \sigma_2(b) = a \end{array}.
\]
Then $\sigma_1^2\circ\sigma_2 = \sigma_2^2\circ\sigma_1$ as both give
\[
a \mapsto ababa \quad \textrm{and} \quad b \mapsto aba.
\]
However, $\sigma_2\circ\sigma_1^2 \neq \sigma_1\circ\sigma_2^2$ as
\begin{align*}
\sigma_2\circ\sigma_1^2(a) = baaba & \neq abaab = \sigma_1\circ\sigma_2^2(a)
\\ \sigma_2\circ\sigma_1^2(b) = baa & \neq aab = \sigma_1\circ\sigma_2^2(b)
\end{align*}
This also says that $(\sigma_2\circ\sigma_1^2)^k \neq (\sigma_1\circ\sigma_2^2)^k$ for any $k\in \NN$ since the former always maps onto words starting with $b$ and the latter always maps onto words starting with $a$.

Now consider that
\[
(\sigma_2\circ\sigma_1^2)^2 \ = \ \sigma_2\circ(\sigma_1^2\circ\sigma_2)\circ\sigma_1^2
\ = \ \sigma_2\circ(\sigma_2^2\circ\sigma_1)\circ\sigma_1 \ = \ \sigma_2^3\circ\sigma_1^3
\]
and 
\[
(\sigma_1\circ\sigma_2^2)^2 \ = \ \sigma_1\circ(\sigma_2^2\circ\sigma_1)\circ\sigma_2^2
\ = \ \sigma_1\circ(\sigma_1^2\circ\sigma_2)\circ\sigma_2 \ = \ \sigma_1^3\circ\sigma_2^3
\]
Hence, we can define $\tau_{2i-1} = \sigma_1^3$ and $\tau_{2i} = \sigma_2^3$ for $i\in\NN$ which gives that
\[
\tau_{2i}\circ\tau_{2i-1} = (\sigma_2\circ\sigma_1^2)^2 \ \ \textrm{and} \ \ \tau_{2i+1}\circ\tau_{2i} = (\sigma_1\circ\sigma_2^2)^2
\]
with $\tau_1$ having the same substitution matrix as $\sigma_2\circ\sigma_1^2$. Therefore, this implies that $\sigma_2\circ\sigma_1^2$ and $\sigma_1\circ\sigma_2^2$ have telescope equivalent ordered Bratteli diagrams.
\end{example}

\section*{Acknowledgements} The first author was supported by the NSERC USRA 2021-562929 and both authors were supported by NSERC Discovery Grant 2019-05430. The second author would like to thank Nicolae Strungaru for several helpful discussions.

\section*{Competing interests:} The authors declare none.


\begin{thebibliography}{99}

\bibitem{AO}
M. Baake and U. Grimm, \textit{Aperiodic Order. Vol.~1: A Mathematical Invitation}, Cambridge University Press, Cambridge, 2013.

\bibitem{CSS}
S. Balchin, D. Rust, \textit{Computations for Symbolic Substitutions}, Journal of Integer Sequences \textbf{20} (2017) 


\bibitem{SDO}
J. Bellissard, A. Bovier, J. Ghez, \textit{Gap labelling theorems for one dimensional discrete Schr\"{o}dinger operators}, Reviews in Mathematical Physics \textbf{04} (1992), 1-37. 


\bibitem{BIM}
C. Bezuglyi, J. Kwiatkowski, K Medynets, B. Solomyak, \textit{Invariant measures on stationary Bratteli Diagrams}, Ergodic Theory and Dynamical Systems \textbf{30} (2009), 973–1007. 

\bibitem{BMT}
M. Boyle, B. Marcus, P. Trow, \textit{Resolving maps and the dimension group for shifts of finite type}, Mem. Amer. Math. Soc. {\bf 70} (1987), no.~377, {\rm vi}+146 pp.

\bibitem{ILFD}
O. Bratteli, \textit{Inductive limits of finite dimensional C*-algebras}, American Mathematical Society \textbf{171} (1972), 195-234.

\bibitem{NSB}
O. Bratteli, P.T. Jorgensen, K. H. Kim, F. Roush, \textit{Non-stationarity of isomorphism between AF algebras defined by stationary Bratteli diagrams}, Ergodic Theory and Dynamical Systems \textbf{20} (2000), 1639-1656. 

\bibitem{DIP}
O. Bratteli, P.T. Jorgensen, K. H. Kim, F. Roush, \textit{Decidability of the isomorphism problem for stationary AF-algebras and the associated ordered simple dimension groups}, Ergodic Theory and Dynamical Systems, \textbf{21} (2001), 1625-1655.


\bibitem{AA}
F. Durand, B. Host, C. Skau, \textit{Substitutional dynamical systems, Bratteli diagrams and dimension groups}, Ergodic Theory and Dynamical Systems, \textbf{19(4)} (1999), 953–993.

\bibitem{Ell}
G. A. Elliott, \textit{On the classification of inductive limits of sequences of semisimple finite-dimensional algebras}, J. Algebra {\bf 38} (1976), 29--44.

\bibitem{SDA}
N.P. Fogg, editors, \textit{Substitutions in Dynamics, Arithmetics and Combinatorics}, Springer, 2002.

\bibitem{EP}
N.P. Frank, \textit{A Primer of substitution tilings of the euclidean plane}, Expositiones Mathematicae \textbf{26} (2008), 295-326.





\bibitem{HPS}
R. Herman, I. Putnam, C. Skau, \textit{Ordered Bratteli diagrams, dimension groups and topological dynamics}, International Journal of Mathematics \textbf{03} (1992), 827-864


\bibitem{LimitAlg} 
E. G. Katsoulis, C. Ramsey, \textit{Limit algebras and integer-valued cocycles, revisited}, J. London Math. Soc. {\bf 94} (2016), 839-858. 


\bibitem{LindMarcus}
D. Lind, B. Marcus, {\it An introduction to symbolic dynamics and coding}, Cambridge University Press, Cambridge, 1995. 


\bibitem{PM}
N.I.P. Mackinnon, P.F. Rivett, \textit{Prime Matrices}, The Mathematical Gazette \textbf{70} (1986), 257-258.

\bibitem{GIBP}
D. Mundici, \textit{Simple Bratteli diagrams with a G\"{o}del-incomplete C* equivalence problem}, Transactions of the American Mathematical Society \textbf{356} (2003), 1937-1955. 


\bibitem{TAF}
J.R. Peters, Y.T Poon, \textit{$\mathbb{Z}$-Analytic TAF Algebras and Partial Dynamical Systems}, Journal of Operator Theory \textbf{50} (2003), 387-409.


\bibitem{ZTAF}
Y.T. Poon, B.H. Wagner, \textit{$\mathbb{Z}$-Analytic TAF Algebras and Dynamical Systems}, Houston J. Math {\bf 19} (1993), 181-199.

\bibitem{LMTA}
S.C. Power, \textit{Limit algebras: An introduction to subalgebras of C*-algebras}, Longman Scientific \& Technical: Essex, England, 1992.


\bibitem{LS}
G. Rozenberg, A. Salomaa, \textit{L systems}, Springer-Verlag: Heidelberg, Berlin, 1974.






\end{thebibliography}
\end{document}